\documentclass[14pt,a4paper]{article}
\usepackage{version}
\usepackage{float}
\usepackage{amsmath}
\usepackage{amstext}
\usepackage{tabularx}
\usepackage[integrals]{wasysym}
\usepackage{marvosym}
\usepackage{manfnt}
\usepackage{ifsym}
\usepackage[psamsfonts]{amssymb}
\usepackage[unicode,bookmarks,bookmarksopen,bookmarksopenlevel=0,colorlinks,%
plainpages=false,pdfpagelabels,hypertexnames=false,pageanchor=true,breaklinks=true,%
linkcolor=blue,citecolor=blue,urlcolor=red]{hyperref}
\usepackage[left=2.5cm, right=2.5cm, top=2.5cm, bottom=2.5cm,
includehead]{geometry}
\usepackage{upref}  %ссылки независимо от контекста печатаются прямыми %
\usepackage{indentfirst}
\newif\ifpdf
\ifx\pdfoutput\undefined
\pdffalse % we are not running PDFLaTeX
\else
\pdfoutput=1 % we are running PDFLaTeX
\pdftrue \fi
\ifpdf
\usepackage[pdftex]{graphicx}
\usepackage{graphicx}
\else
\usepackage{graphicx}
\fi

\newcommand{\wtlnbl}{{\widetilde{\nabla}}}
\newcommand\pr{\partial}
\newcommand\ben{\begin{enumerate}}
\newcommand\een{\end{enumerate}}
\newcommand\bed{\begin{itemize}}
\newcommand\eed{\end{itemize}}
\newcommand\bei{\begin{description}}
\newcommand\eei{\end{description}}
\newcommand\beear{\begin{eqnarray}}
\newcommand\eear{\end{eqnarray}}
\newcommand\beq{\begin{eqnarray*}&}
\newcommand\eeq{&\end{eqnarray*}}

\newcommand\wqx{\widetilde{q}^\xi}
\newcommand\wqta{\widetilde{q}^\tau}
\newcommand\bqx{\overline{q}^\xi}

\newcommand\pcr{{p}^\circ}
\newcommand\pbl{{p}^\bullet}
\newcommand\efr{\mathfrak{e}}
\newcommand\astphi{{\phi_\ast}}
\newtheorem{lemma}{Lemma }%[section]
\newtheorem{theorem}{Theorem}%[section]
\newtheorem{dfntn}{Definition}%[section]
\newtheorem{rmrk}{Remark}

\newcommand\const{\operatorname{const}}
\newcommand\Div{\operatorname{div}}

\def\eps{{\epsilon}}

\def\re{\mathfrak{Re\,}}
\def\im{\mathfrak{Im\,}}
\newcommand{\byd}{\stackrel{\mathrm{def}}{=}}

\newcommand\vb{\boldsymbol{b}}
\newcommand\vc{\boldsymbol{c}}

\newcommand\vu{\boldsymbol{u}}
\newcommand\vv{\boldsymbol{v}}
\newcommand\vw{\boldsymbol{w}}
\newcommand\vx{\boldsymbol{x}}
\newcommand\vy{\boldsymbol{y}}

\newcommand\vg{\boldsymbol{g}}

\newcommand\vomega{\boldsymbol{\omega}}

\newcommand{\hx}{{\hat{x}}}
\usepackage{fancyhdr}
\usepackage{lastpage}
\begin{document}
%\addtocounter{page}{-1}
\thispagestyle{empty}
\title{ {\small \bf Indirect Prey-taxis VS a Shortwave External Signal in  Multiple Dimensions}}
\author{{\small Andrey Morgulis\footnote{\footnotesize the corresponing author, ORCID 0000-0001-8575-4917X, abmorgulis@sfedu.ru}}, \\
{\footnotesize I.I.Vorovich Institute for Mathematic, Mechanics and Computer Science,}\\
{\footnotesize Southern Federal University, Rostov-na-Donu, Russia;\quad}\\
{\footnotesize Southern Mathematical Institute of VSC RAS, Vladikavkaz, Russia}
 \and 
  {\small Karrar H. Malal}\\
 {\footnotesize I.I.Vorovich Institute for Mathematics, Mechanics and Computer Sciences,}\\
{\footnotesize Southern Federal University, Rostov-na-Donu, Russia} \\
  }
%\vspace{3mm}\\
%\date{{\footnotesize August 13, 2025}}
\maketitle
\abstract{
\noindent We address a short-wave asymptotic for one class of quasi-linear second order  PDE systems 
involving  the cross-diffusion described by the so-called  Patlak--Keller--Segel law. It is common to  employ these equations for modelling   the predator--prey community with the prey-taxis that  means the interactions of two species of particles or cells
or anything else through which the species called "predators" is capable of moving directionally while searching for the other species called "prey." However, we  suppose the predators to be sensitive not to the prey density but to a driving  signal produced  by the prey. Additionally, the  production of the driving signal is assumed to be sensitive to the intensity of an external   field, which  is independent from the community state. This is what we call the external signal. It  can be due to the spatiotemporal inhomogeneity of the environment arising from natural or artificial reasons.  We assume that the  external signal takes a general short-wave form and construct a complete  asymptotic expansion for the    short-wave solutions with no restrictions on the spatial dimension or kinetics of inter- or intra-specific reactions.  Further, we apply  the short wave asymptotic to studying the stability or instability induced  by the external signal  following Kapitza' theory for the upside-down pendulum. Applying the general results to some special classes of external signals, we get examples of suppressing the taxical transport, examples of robustness of the species equilibrium to the signal up to a very strong stabilization, or, oppositely, destabilization and somewhat like blurring the borderline in the parametric space between the areas of stability and instability of this equilibrium. These results contribute to filling the gap in the literature, since the theory and techniques for the asymptotic integration of systems described above represent a weakly charted area. }

\emph{Keywords: Patlak--Keller--Segel systems,  prey-taxis, averaging, homogenization, short-wave asymptotic, induced stability}
\setcounter{figure}{0}
%%%%%%%%%%%%%%%%%%%%%%%%%%%%%%%%%%%%%%%%%%
\section*{Introduction}\label{ScIntr}
\addcontentsline{toc}{section}{Introduction} 
%This study continues   articles \cite{AM1},\cite{AM2},  \cite{AM3}, and \cite{AM4}, and  most of all relates to the first one,  which includes an extended introductory part and is open to access.   So, we allude to it for guiding through the topic and the references, while putting quite a sketched  introduction here. 
\noindent
%All the studies mentioned (including the present one) 
This study addresses the interaction of two different species, say, the predators and the prey, each  one consisting of indistinguishable particles. The spatial dispersal of these species is due to the diffusion and also due to predators’ capability of pursuing the prey while producing a macroscopic mass flux. It is common to regard the latter feature with the name of prey-taxis. This naming goes back to article \cite{KrvOdll}, which perhaps has pioneered the topic. For the taxical fluxes, a widely recognized (though not unique) model is the Patlak-Keller-Segel one (PKS, in what follows). It states that such a flux is parallel to the gradient of the intensity of the signal that the predators perceive while seeking the prey. Often (though not always), it is the prey density. Adding the driving signal equation (if necessary) leads to a second-order PDE system that governs the distributions of both species' masses. Usually, such models also involve the kinetic terms to describe the inter- and intra-specific reactions, and, eventually, they often represent special cases of the evolutionary systems addressed by H. Amann \cite{Amann}.

Amann's works deliver a general mathematical background for exploring the qualitative features of  PKS systems that are of interest by their own and may have real-life relevance. These are the blow-up, the extinction or coexistence at an equilibrium, or the creation of nice spatiotemporal patterns;  \emph{cf.}  review papers \cite{Ts94}, \cite{Hrstmnn}, \cite{HllnPntr}, and \cite{BellBellTao}. A significant body of research addresses the pattern formation due to chains of bifurcations,  \emph{cf.}  \cite{BrzKrv}--\cite{WuWnGn}, and this extensive list nevertheless includes only the publications substantially relevant to the present study. Anyway, the scenarios they revealed typically begin with an instability of the simplest regime, such as the equilibrium of species with constant densities (the homogeneous equilibrium). Here  the occurrence of instability means an appearance or disappearance of exponentially growing modes of small perturbations due to changing some control parameters of a system. Although this is a linear phenomenon, it is an important precursor of the local bifurcations. We'll get back to discussing this issue in subsection~\ref{SscLnStbAnls}.

 Among the  investigations of PKS systems, the vast majority of studies focus on homogeneous systems, though every real-life system is inhomogeneous to some extent for natural or artificial reasons, and this is definitely worth taking into account when the inhomogeneity creates sharply different scales. The simplest way to do so is to formulate the system with small parameters, as do several recent articles that allow for two different time scales,  \emph{cf.}  \cite{Chdh1}, \cite{Chdh3}, \cite{TllWrzLst}, and \cite{PgGd}.  However, it is not less logical to consider the distributed inhomogeneities specified as a known field with characteristic scales sharply different from the other scales of the system.  
 
Thus, we consider the interplay between the so-called indirect prey-taxis and an external short-wave signal  that impacts the production of the taxis-driving signal (called the driver in the sequel).  The taxical flux of the predators is anyway supposed to obey the PKS rule, but, in contrast to the direct prey taxis,  the driver is not the prey density but another signal released  by the prey.   Exploring the indirect taxis models continues since article \cite{TllWrzk} got it started ten years ago,  \emph{cf.}  \cite{TtnSnTtvBnrj} -\cite{TllWrzLst}, and this list is by no means complete.

So, we address the  multidimensional problem  with  general kinetics of inter/intraspecific reactions that reads
\begin{eqnarray}
 &p_t+\Div({\chi} p \nabla s-\delta_1 \nabla p)=pf(p,q),&
 \label{EqPrdTrns}\\
  &q_t-\Div(\mu\nabla q)= qg(p,q). &
\label{EqPryDnst} \\
 &s_t-\Div(\delta_2 \nabla s)={{{\kappa_2}}} h+{{\kappa_1}} q -\nu s&
  \label{EqSgnl}
\end{eqnarray}
Here {$t$} is time, {$x\in\mathbb{R}^n$} is  spatial coordinate, the predators and prey densities ,$p\ge 0$, ${ {q}}={{q}}\ge 0$, are some unknown functions in the independent variables $x,t$ Functions $f$ and $g$ are to determine the system kinetics. We assume  that they are known and analytic in the cone $\{p>0,q>0\}$ and continuous up to its boundary except for the origin. Inside this cone equation \eqref{EqPrdTrns} determines the predators  balance due to local kinetics, diffusion and   PKS-responding to the driving signal (driver), where the notation of  $s$ is for driver's intensity,  and coefficient, ${\chi}$, measures  predators' sensitivity to the driver.  

The driver's intensity is an additional unknown regarding which we have additional equation \eqref{EqSgnl}. This describes the production of the driver by the prey influenced by the intensity of the external signal, $h$, where $h$ is known function in variables $(x,t)$. Thus, the external signal   does not drive a PKS-flux directly, but by impacting the emission of the  driver. The notations of  $\nu$, ${\kappa_1}$ and ${\kappa_2}$  are for the rates of driver's decay or growth. These are positive numbers  that measures the driver decay due to  environment,   or the  intensity of the driver production by the prey or due to the external signal correspondingly.  The coefficients $\delta_1,\mu,\delta_2$  are the diffusion rates of the    predators, the prey, and  the driver correspondingly.

We regard this system as a dimensionless one, see \cite{AM3} for a discussion on  scaling. In particular, we scale  the predators' and the prey densities and time in  a way to set up
$$
g(0,q)\to 1,\ q\to+0,\quad g(0,1)=0,\  g_p(p,1)\to -1,\ p\to+0.
$$
Additionally, we  normalize  the prey sensitivity coefficient, $\chi$,  by choosing a suitable length scale, so that
$$
\chi=1
$$  
in what follows.
 \begin{rmrk}\label{RmOnFrmln}
\emph{A different, but equivalent formulation (with no external signal)  has been appearing  in articles \cite{GMT,AGMTS} under the name of `slow taxis' and then  in \cite{Chkr}, and, finally, under the name of `prey-induced acceleration' in more recent studies, e.g. \cite{TaoW}.  Unveiling the  mentioned  equivalence  results from to one  remark in article \cite{TtnZgr}.}
 \end{rmrk}
The present study aims at the effects of the short-wave external signal on the mass transport and pattern formation in the indirect PKS-taxis models using the short-wave asymptotic. To be more formal regarding the last   one, given small $\delta>0$, we set
\begin{eqnarray}&\label{ShrtWvSgnl}
h=h(x,t,x/\delta,t/\delta),\quad \delta_1=\mu_1\delta,\quad \delta_2=\mu_2\delta 
&\end{eqnarray}
where function $h=h(x,t,\xi,\tau)$ is $\mathrm{C}^{\infty}$-smooth on $\mathbb{R}^{n+1}\times\mathbb{R}^{n+1}$ and   periodic in coordinates $\xi_1,\ldots\xi_n,\tau$ with periods  $\ell_1,\ldots\ell_n,\ell_0$, and  we  seek for the expansion that reads 
\begin{eqnarray}&\label{AsmpExpnsn}
  (p,q,s)=\sum\limits_{k=0}^{K-1}({p}_k,q_k,s_k)(x,t,\xi,\eta)\delta^k+O(\delta^K),\ \delta\to+0,\ K\in \mathbb{N}.
&\end{eqnarray} 
 where the order of approximation, $K$, is as high as one wishes  and the coefficients, ${p}_k,s_k$ are periodic in the fast coordinates, $\xi_1,\ldots,\xi_n,\tau$, with periods $\ell_1,\ldots\ell_n,\ell_0$. 
\section{Materials and Methods}
\label{ScSttng}
\noindent 
The problems  like that set in the end of Introduction seem to be quite feasible to the homogenization and multiple-scale expansion methods,  \emph{cf.}  \cite{Allr-1}, for example. In turn, such an asymptotic can help with addressing the effect of the inhomogeneity on the occurrence of instability and the beginning of the pattern formation, as it happens in Kapitza's theory of the upside-down pendulum or its later developments, \emph{cf.}  \cite{LndLfs}, \cite{ZenSimIzvAN66}-\cite{TvstkT}.  Surprisingly, implementing such a program for the prey-taxis and more general models of the populational or cell dynamics brings one to a weakly charted area.  To an extent, articles \cite{AM3}-\cite{AM4} contribute to filling this gap while addressing the direct prey-taxis — that is, the signal driving the taxical movement is the prey density itself. Besides, the predators are assumed to respond to another signal called "external," which is independent of  system's state, while the response obeys the PKS law again.  Below we continue this line to the indirect taxis model.

We start with considering several tools for future needs.
\subsection{Averaging}
\label{SscAvrgng}
\noindent
Let the skew brackets mean  the averaging over the fast variables, and let operator $M$ do the same -- that is,
\begin{eqnarray}&\label{DfAvrfng}
  (Mf)(x,t)=\langle f \rangle(x,t)\byd\lim\limits_{L\to\infty}\frac{1}{(2L)^{n+1}}\int\limits_{{\mathbb{R}}^{n+1}}f(x,t,\xi,\tau)\,d\xi d\tau
&\end{eqnarray}
Additionally,  we'll  be using the partial (spatial and temporal) averaging operations, that reads
\begin{eqnarray}
 & (M^\xi f)(x,t,\tau)=\langle f \rangle^\xi (x,t,\tau)\byd\lim\limits_{L\to\infty}\frac{1}{(2L)^{n}}\int\limits_{{\mathbb{R}}^{n}}f(x,t,\xi,\tau)\,d\xi,&
\label{DfSptAvrgng} \\  
&  (M^\tau f)(x,t,\xi)=\langle f \rangle^\tau (x,t,\xi)\byd\lim\limits_{L\to\infty}\frac{1}{(2L)}\int\limits_{{\mathbb{R}}^{1}}f(x,t,\xi,\tau)\,d\tau.&
\label{DfTmprlAvrgng}
\end{eqnarray}
Evidently,
\begin{eqnarray*}&
M^\xi M^\tau=  M^\tau M^\xi =M \Leftrightarrow\langle\langle f \rangle^\xi \rangle^\tau=\langle\langle f \rangle^\tau \rangle^\xi=\langle f \rangle.
&\end{eqnarray*}
 
Let  $\mathbb{T}^{n+1}=\mathbb{T}^{n+1}_{\xi,\tau}$ be the $n+1$-torus that results from  the factorization of $\mathbb{R}^{n+1}$ over a lattice 
\begin{eqnarray*}
&
\mathbb{Z}_\ell^{n+1}\byd \ell_1\mathbb{Z}\times \ldots\times{\ell_n}\mathbb{Z}\times \ell_{0}\mathbb{Z}.
&
\end{eqnarray*}
Similarly, we identify  n-torus $\mathbb{T}^{n}=\mathbb{T}^{n}_\xi$  with the factor  of  $\mathbb{R}^{n}=\mathbb{R}^{n}_\xi$ over the lattice 
\begin{eqnarray*}
&
\mathbb{Z}_\ell^{n}\byd \ell_1\mathbb{Z}\times \ldots\times{\ell_n}\mathbb{Z}.
&\end{eqnarray*}
Given the  assumptions on the periodicity above,  we identify the fast coordinates, $\xi,\tau$ with  the coordinates on  torus $\mathbb{T}^{n+1}$ and the averaging \eqref{DfAvrfng} with the averaging over this torus. Similarly, we identify the partial mean values \eqref{DfSptAvrgng} and  \eqref{DfTmprlAvrgng} with averaging over tori of $\mathbb{T}_\xi^n$ and $\mathbb{T}^1_\tau$ (the latter is just the circumference).
\subsection{Operators and projectors}
\label{SscDfOpr}
From here on,  using the notation of fast variables as the lower indices at the notations of a dependent variable indicates the partial derivative of this dependent variables in the corresponding fast variables. Similarly, using the notation of fast variables as the lower indices for the notation of a differential operation means that this operation has to be performed by differentiation in the fast variables. 

Throughout this article,  the notation is due  as follows
\begin{eqnarray}
& \mathcal{H}_{\eps}=\pr_\tau-\eps\Delta_\xi,\quad\mathcal{D}\byd ({{\wtlnbl}\cdot} \nabla +{\nabla\cdot}{\wtlnbl})\quad H_\eps\byd\pr_t-\eps\mathcal{D}.&
\label{Ntn3}
\end{eqnarray}
Define
\begin{eqnarray}
&
 \mathrm{H}^{s,2}\byd \{u\in{\mathrm{L}}_2\left(\mathbb{T}^{n+1}\right):\, \sum\limits_{} (k^{2s}+m^4)|\hat{u}_{km}|^2<\infty\quad  (k,m)\in \mathbb{Z}_\ell^{n+1}\}
 &
 \label{DfDomH}\\
 &
 \hat{u}_{k,m}=\langle u,\mathrm{e}_{-k,-m}\rangle ,\quad \mathrm{e}_{k,m}={\exp(i(k\tau+m\xi))}{\sqrt{\ell_0\ldots \ell_n}},\quad (k,m)\in \mathbb{Z}_\ell^{n+1}
 &
 \nonumber
\end{eqnarray}
Further, define operators $\mathcal{H}:{\mathrm{L}}_2\left(\mathbb{T}^{n+1}\right)\to {\mathrm{L}}_2\left(\mathbb{T}^{n+1}\right)$ and $\mathcal{L}:{\mathrm{L}}_2\left(\mathbb{T}^{n+1}\right)\to {\mathrm{L}}_2\left(\mathbb{T}^{n+1}\right)$ on domains  $\mathrm{H}^{s,2}\subset {\mathrm{L}}_2\left(\mathbb{T}^{n+1}\right)$, $s=1,0$, correspondingly,  by the expressions specified in \eqref{Ntn3}. By this definitions,  operators $\mathcal{H}$ and $\mathcal{L}$ are densely defined and closed.

Operator $\mathcal{H}=\mathcal{H}_{\eps}$ is nothing else than the heat operator on torus, regarding which it is well-known that its kernel
 is the  spectral subspace spanned by the identical eigenfunction.  In other words,  $\ker \mathcal{H}$ consists of the identical functions and there are no  generalized eigenfunctions.  The corresponding spectral projector reads 
\begin{eqnarray*}&
1_*\otimes 1_*,\quad 1_* \sqrt{\ell_0\ldots \ell_n}\stackrel{\mathrm{def}}{\equiv} 1,\quad (a\otimes b)c \byd(c,b)a.
&\end{eqnarray*}
and it is worth noticing that
\begin{eqnarray*}&
1_*\otimes 1_*=M.
&\end{eqnarray*}  
Correspondingly,
\begin{eqnarray*}&
\mathrm{range}\left(I-1_*\otimes 1_*\right)=\{u\in{\mathrm{L}}_2\left(\mathbb{T}^{n+1}\right):\, \langle u \rangle=0 \}\byd \widetilde{\mathrm{L}}_2\left(\mathbb{T}^{n+1}\right).
&\end{eqnarray*}
There exists the right inverse operator, $\mathcal{H}^{-1}$, and it is bounded $\mathcal{H}^{-1}:\widetilde{\mathrm{L}}_2\left(\mathbb{T}^{n+1}\right)\to \widetilde{\mathrm{L}}_2\left(\mathbb{T}^{n+1}\right)$. 
Hence, for every $v\in {\mathrm{L}}_2\left(\mathbb{T}^{n+1}\right)$ such that $\langle v\rangle=0$ the general solution to equation $\mathcal{H}u=v$ on torus $\mathbb{T}^{n+1}$ reads
\begin{eqnarray*}&
u=\langle u\rangle +\mathcal{H}_\eps^{-1}v, \quad \langle\mathcal{H}^{-1}v\rangle=0,
&\end{eqnarray*} 
where $\langle u\rangle$ is free parameter. 
\begin{dfntn}\label{DfL(w)}
\textsf{\emph{Given a smooth  vector function $\vw=\vw(x,t,\xi,\tau)\in \mathbb{R}^n$ on $\mathbb{T}^{n+1}\times\mathbb{T}^{n+1}$ and $\eps=\const>0$, define linear operator $\mathcal{L}=\mathcal{L}(\vw,\eps): \mathrm{H}(\mathbb{T}_{\xi,\tau}^{n+1})\to\mathrm{H}(\mathbb{T}_{\xi,\tau}^{n+1})$ with domain $\mathrm{H}^{1,2}$ by mapping $$u\mapsto \mathcal{H}_{\eps}u+{ {\wtlnbl}\cdot}(u\vw).$$ }}
\end{dfntn}
\begin{lemma}\label{LmmExst}
\textsf{\emph{The spectral projectors ${P}$ onto $\ker \mathcal{L}$ reads}}
$$
{P}:u\mapsto \langle u\rangle \astphi,\quad \astphi\in \ker \mathcal{L},\quad \langle \astphi \rangle=1.
$$
\end{lemma}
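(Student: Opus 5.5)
We need to show three things about $\mathcal{L}u=\mathcal{H}_\eps u+\wtlnbl\cdot(u\vw)$ on $\mathbb{T}^{n+1}$ (with $(x,t)$ as parameters): $\ker\mathcal{L}$ is one-dimensional and spanned by a function $\astphi$ with $\langle\astphi\rangle=1$; zero is a semisimple eigenvalue, i.e. there are no generalized eigenfunctions; and the complementary invariant subspace is $\mathrm{range}\,\mathcal{L}=\widetilde{\mathrm{L}}_2(\mathbb{T}^{n+1})$. Given these, the spectral projector onto $\ker\mathcal{L}$ along $\mathrm{range}\,\mathcal{L}$ is $u\mapsto\langle u\rangle\astphi$: this map is idempotent because $\langle\astphi\rangle=1$, its range is $\ker\mathcal{L}$, and its kernel is $\{\langle u\rangle=0\}$.

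\textbf{Step 1: the adjoint and the range.} The formal adjoint is $\mathcal{L}^*v=-v_\tau-\eps\Delta_\xi v-\vw\cdot\wtlnbl v$. Constants lie in $\ker\mathcal{L}^*$. Conversely, suppose $\mathcal{L}^*v=0$ and take the maximum of $v$ over the torus at a point $(\xi_0,\tau_0)$. There $\wtlnbl v=0$, $v_\tau=0$ and $\Delta_\xi v\le0$. This alone does not force $v$ to be constant; we need the strong maximum principle for the backward parabolic equation $v_\tau+\eps\Delta_\xi v+\vw\cdot\wtlnbl v=0$ in periodic time. Since this equation is periodic in $\tau$, I would pass to the period map. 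Let $\Pi$ be the solution operator of the forward parabolic problem over one period $\ell_0$, i.e. the Poincaré map of $\mathcal{L}u=0$ regarded as an evolution in $\tau$. Then:
\begin{itemize}
\item $\Pi$ is compact, because of parabolic smoothing.
\item $\Pi$ is strongly positive: a nonnegative, nonzero initial datum becomes strictly positive after positive time, by the strong maximum principle or Harnack's inequality on the compact torus $\mathbb{T}^n_\xi$.
\item $\Pi$ preserves the $\xi$-mean, because the equation is in divergence form: $\frac{d}{d\tau}\int u\,d\xi=0$.
\end{itemize}

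\textbf{Step 2: Krein--Rutman.} Mass preservation gives $\Pi^*1=1$, so the spectral radius of $\Pi$ equals $1$, since $1$ is a positive eigenvector of $\Pi^*$. Krein--Rutman then yields a unique positive fixed point $\phi_0$ of $\Pi$ with $\int\phi_0\,d\xi=1$. The eigenvalue $1$ is algebraically simple, and all other spectrum of $\Pi$ lies strictly inside the unit disk. Periodic solutions of $\mathcal{L}u=0$ correspond exactly to fixed points of $\Pi$, so $\ker\mathcal{L}$ is spanned by the solution $\astphi$ started from $\phi_0$. It is positive, and after normalization $\langle\astphi\rangle=1$. Algebraic simplicity of the eigenvalue $1$ for $\Pi$ translates, via the standard Floquet correspondence, into the absence of generalized eigenfunctions of $\mathcal{L}$ at $0$. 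A direct check: if $\mathcal{L}u=\astphi$, averaging both sides gives $0=\langle\mathcal{L}u\rangle=\langle\astphi\rangle=1$, a contradiction. Here $\langle\mathcal{L}u\rangle=0$ because $\mathcal{L}$ is a $\tau$-derivative plus a $\xi$-divergence.

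\textbf{Step 3: closing the argument.} The same averaging identity shows $\mathrm{range}\,\mathcal{L}\subset\widetilde{\mathrm{L}}_2$. Conversely, $\mathcal{L}$ is a relatively compact perturbation of $\mathcal{H}_\eps$ (the first-order term is lower order), so it is Fredholm of index zero. Hence $\mathrm{range}\,\mathcal{L}$ is closed with codimension $\dim\ker\mathcal{L}=1$, and therefore equals $\widetilde{\mathrm{L}}_2$. Combining the three facts, $\mathrm{L}_2=\ker\mathcal{L}\oplus\widetilde{\mathrm{L}}_2$, and the projector is as claimed.

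\textbf{Main obstacle.} The key step is uniqueness and simplicity of the periodic kernel element, i.e. the positivity/Krein--Rutman argument for the period map in Step 2. Smooth dependence on the parameters $(x,t)$ follows afterwards from simplicity of the eigenvalue via the implicit function theorem.
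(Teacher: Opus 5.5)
Your argument is correct, but it works on the primal (forward) side, whereas the paper works on the dual side.

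\textbf{The paper's route.} It conjugates $\mathcal{L}^*$ by the inversion $(\xi,\tau)\mapsto(-\xi,-\tau)$, which turns it into a forward parabolic operator in non-divergence form. The strong maximum principle on the space-time torus then gives $\ker\mathcal{L}^*=\{\const\}$: an interior extremum propagates backward in $\tau$, and periodicity spreads it everywhere. The unilateral form of the same principle rules out solutions of $\mathcal{L}^*\psi=1$, so $0$ is a simple pole of the resolvent of $\mathcal{L}^*$. Fredholm theory for the compact resolvent then gives $\dim\ker\mathcal{L}=1$. Finally, a nonzero kernel element with zero mean would lie in $\mathrm{range}\,\mathcal{L}=(\ker\mathcal{L}^*)^\perp$. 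That would create a Jordan chain for $\mathcal{L}$, hence one for $\mathcal{L}^*$, contradicting the previous step. So the step you set aside in your Step 1, constancy of $\ker\mathcal{L}^*$ via the strong maximum principle, is exactly the paper's key step, and it works without any period map.

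\textbf{Your route.} You get $\dim\ker\mathcal{L}=1$ and $\langle\astphi\rangle\neq0$ from Krein--Rutman applied to the Poincar\'e map of the forward Fokker--Planck equation. You then exclude Jordan chains through $\langle\mathcal{L}u\rangle=0$, and identify $\mathrm{range}\,\mathcal{L}=\widetilde{\mathrm{L}}_2$ by the index-zero Fredholm property. In the paper that last identification comes essentially for free from $\ker\mathcal{L}^*=\{\const\}$.

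\textbf{What each buys.} Your approach needs more machinery. In particular, the strong form of Krein--Rutman requires a cone with nonempty interior, so you should run it in $\mathrm{C}(\mathbb{T}^n_\xi)$ and transfer the result to $\mathrm{L}_2$ by parabolic smoothing. In return it gives two things the paper never establishes:
\begin{itemize}
\item $\astphi>0$, so that $p_0=\bar{p}\astphi$ is a genuine density profile;
\item a Floquet spectral gap: every other multiplier lies strictly inside the unit disk, so the periodic profile attracts mass-normalized data.
\end{itemize}
The paper's argument is shorter and stays on the space-time torus, but it yields no sign information on $\astphi$.
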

\textbf{Proof.}
Let $\mathcal{L}^*$ denote the operator adjoint  to $\mathcal{L}$ and let $\mathcal{J}:\mathrm{H}\to\mathrm{H}$ be the action of inversion $(\xi,\tau)\mapsto(-\xi,-\tau)$. Define
$$
\breve{\mathcal{L}}^*=\mathcal{{J}}\mathcal{L}^*\mathcal{{J}}
$$
Then
$$
\breve{\mathcal{L}}^*:\varphi\mapsto (\pr_\tau+\vw{\wtlnbl}) \varphi-\eps \Delta\varphi.
$$
Notice that
$$
\mathrm{Ker}\,\breve{\mathcal{L}}^*=\{\varphi\equiv\const\}=\mathrm{Ker}\,\mathcal{L}^*,
$$
as every element in this subspace must be a solution to the PDE, which obeys the strong maximum and minimum principles~(see, e.g.,~\cite{Nrnbrg,Lnds}). Similarly, neither equation $\breve{\mathcal{L}}^*\breve{\psi}=1$ nor equation
$
{\mathcal{L}}^*\psi=1
$
admits for a solution in $\mathrm{H}$  by the unilateral strong maximum/minimum principles. Consequently,  the resolvent   $(\mathcal{L}^*-\lambda \mathcal{I})^{-1}$, $\lambda\in \mathbb{C}$,  has a simple pole at  the origin. Since this resolvent is compact,  the~pair of operators $\mathcal{L}^*$ and $\mathcal{L}$ obeys  Fredh\"{o}lm's theorems. Hence $\dim\,\mathrm{Ker}\,\mathcal{L}=1$. Furthermore, conjecturing that $\langle u \rangle=0$ for some $u\in \mathrm{Ker}\,\mathcal{L}\setminus\{0\}$  would imply the existence of solution to equation $\mathcal{L}^*\psi=\const\neq 0$ but this contradicts to what we have proved above. Further, right by definitions, $${P}^2u=\langle{P}u\rangle\astphi =\langle u\rangle\langle \astphi\rangle \astphi={P}u,$$ and 
$
\mathcal{L}{P}={P}\mathcal{L}=0.
$
$\square$
\begin{rmrk}\label{RmOnPhi}
 \emph{An useful byproduct of Lemma~\ref{LmmExst} is the implicit mapping 
\begin{eqnarray}&\label{DfPhi} 
{\Phi}(\cdot,\eps):\vw\mapsto \astphi,\quad \astphi\in \ker\mathcal{L}( {\vw},\eps),\quad \langle\astphi\rangle=1.
 &\end{eqnarray}}
\end{rmrk}
\begin{rmrk}\label{RmOnInvrs} \emph{In what follows the notations of $\mathcal{L}^{-1}( {\vw},\eps)$ or $\tilde\Delta^{-1}$, or $\pr_{\tau}^{-1}$ means the right inverse operators which sends  the ranges of the direct ones to themselves, similarly to operator $\mathcal{H}^{-1}$ discussed above.  }
\end{rmrk}
\section{Results}  
\label{ScAsmpt}
\noindent
Here we get back to the system consisting of equations \eqref{EqPrdTrns}, \eqref{EqSgnl} and \eqref{EqPryDnst}, where the external signal, $h$, is a general short wave  given by expression \eqref{ShrtWvSgnl}.   We are seeking for asymptotic \eqref{AsmpExpnsn}.

Given a vector function $\vw$, let $f$ be some coefficient (we mean $p_k$, $q_k$, or $s_k$) in  expansion \eqref{AsmpExpnsn}, then the notations for its averages and for the deviations from them is as follows
\begin{eqnarray*}
&
\bar{f}=\langle f \rangle=Mf,\quad \overline{f}^\xi=M^\xi f,\quad \widetilde{f}^\xi=(I-M^\xi)f, \quad f^\circ=(I-{P})f,
&
\\
&
\widetilde{f}^\tau=(M^\xi-M)f,\quad f^\bullet={P}f=\langle f\rangle\phi_*.
&
\end{eqnarray*}
so that
\begin{eqnarray*}
&
f=\bar{f}+\tilde{f}=f^\bullet+f^\circ,\quad f=\bar{f}+\widetilde{f}^\tau +\widetilde{f}^\xi,\quad \tilde{f}=\widetilde{f}^\xi+\widetilde{f}^\tau.
&
\end{eqnarray*}

The system under consideration written with the use of the fast variables reads as
\begin{eqnarray}
&\delta \left((\mathcal{H}_{\mu_1}+\delta {H_{\mu_1}})p+{ {\nabla}\cdot}(p{\wtlnbl}{s})+{ {\wtlnbl}\cdot}(p{\nabla}{s})\right)+{ {\wtlnbl}\cdot}(p{\wtlnbl}{s})=
&
\nonumber\\
&
\delta^2\left(pf(p,s)-p_{t}-{ {\nabla}\cdot}(p\nabla {s})+\delta \mu_1\Delta p\right),
&
\label{EqPrdFstVrbl0}\\
&\left({\mu\widetilde{\Delta}}+\delta \left(\pr_\tau-\mu\mathcal{D}\right)\right) q= \delta^2\left(qg(p,q)- (\pr_t-\mu \Delta){q}\right),&
\label{EqPryDnstFstVrbl0}\\
&(\mathcal{H}_{\mu_2}+\delta H_2){s}=\delta\left({{{\kappa_2}}}h+{{\kappa_1}} q-\nu s+\delta\mu_2 \Delta s\right),&
\label{EqDrvFstVrbl0}
 \end{eqnarray}
When $\delta\to+0$, the limit of this takes the following form
\begin{eqnarray}
&{ {\wtlnbl}\cdot} (p_0{\wtlnbl}{q}_0) =0&,
\label{EqPrdCfcOrd0-0}\\
&\widetilde{\Delta}q_0=0.&
\label{EqPryCfcOrd0}\\
&\mathcal{H}_{\mu_2}s_{0}=0,&
\label{EqDrvrCfcOrd0}
 \end{eqnarray}
The solutions to equations \eqref{EqDrvrCfcOrd0} and \eqref{EqPryCfcOrd0} read
\begin{eqnarray}&\label{S0}
s_0=\bar {s}(x,t),\quad q_0=\bqx_0(x,t,\tau)
&\end{eqnarray}
This makes  the relation  \eqref{EqPrdCfcOrd0-0} true identically, so that we have no equation for $p_0$ at the moment.

Replacing the unknowns in equations \eqref{EqPrdFstVrbl0}-\eqref{EqPryDnstFstVrbl0} by expansions \eqref{AsmpExpnsn} 
and gathering the terms of order $k>0$  leads to a chain of equations  for the   coefficients $p_{k-1},q_k,s_k$ that reads 
\begin{eqnarray}
& 
\mathcal{L}(\vw_0,\mu_1)p_{k-1}=\mu_1\Delta p_{k-3}-{H_{\mu_1}}p_{k-2}
&
\nonumber\\
&
+\sum\limits_{j=0..k-2}(p_jf_{k-2-j}-{\nabla\cdot}(p_j\vw_{k-2-j})   -{{\wtlnbl}\cdot}(p_j\vw_{k-1-j}))
&
\label{EqPrdCfcnt}\\ 
&
\text{where}\quad \vw_{m}= (\nabla s_{m}+{\wtlnbl}{s}_{m+1}),\ m=0,1,..
&
\nonumber
\\
&{\mu\widetilde{\Delta}}{q}_{k}=\left(\pr_\tau-\mu\mathcal{D}\right)q_{k-1} -(\mu\Delta-\pr_t)q_{k-2}-\sum\limits_{j=0..k-2}q_jg_{k-2-j},&
\label{EqPryCfcnt}
\\
&
\mathcal{H}_{\mu_2} s_{k}=\breve{{\kappa}}_{2,k}h+ {{\kappa_1}} q_{k-1}-\nu s_{k-1}+\mu_2\Delta s_{k-2}-{H}_{\mu_2}s_{k-1}.
&
\label{EqDrvrCfcnt}
 \end{eqnarray}
From here on, the following agreements are in effect:  any occurrences of negative indices or upper bonds of the summation ranges  sets the corresponding quantities to zero;  coefficients $f_k,g_k$ result from expanding the   reaction terms as follows  
\begin{eqnarray}&\label{DfFm-Gm}
  f(p,q)=\sum\limits_{m=0}f_m\delta^m,\  f_0=f(p_0,s_0),\quad  g(p,s)=\sum\limits_{m=0}g_m\delta^m,\  g_0=g(p_0,s_0),
&\end{eqnarray}
where we have omitted  the explicit expressions  for the exposition compactness; $\breve{{{{\kappa}}}}_{2,1}\byd{{{\kappa_2}}}$, and $\breve{{{\kappa}}}_{2,k}\byd 0$ otherwise.

We recall that the chain of equations \eqref{EqPrdCfcnt}-\eqref{EqDrvrCfcnt} have to be solved in the functions, which are well-defined on the torus of the fast variables, $\mathbb{T}_{\xi,\tau}^{n+1}$. 

For the subsequent analysis, it is substantial to see the triangle form of system \eqref{EqPrdCfcnt}-\eqref{EqDrvrCfcnt}. Indeed,  the system allows for isolating unknowns $s_k$ and $q_k$ from the third and second equations correspondingly and then isolate   $p_{k-1}$ from the first one, provided all these equations are compatible.  It turns out that compatibility of every separate equation in system \eqref{EqPrdCfcnt}-\eqref{EqDrvrCfcnt} make it compatible in whole. 
\subsection{Leading approximation}\label{SScLdngApprx}
\noindent
Set $k=1$. Then equations \eqref{EqPrdCfcnt}-\eqref{EqDrvrCfcnt} read 
\begin{eqnarray}
&
\mathcal{L}(\vw_0,\mu_1)p_0=0, \quad\ \text{where}\ \vw_{0}= \nabla s_{0}+{\wtlnbl}{s}_{1}
&,
\label{EqPrdCfcOrd1-0}\\
&{\mu\widetilde{\Delta}}q_1=\mathcal{H}_\mu q_{0}.&
\label{EqPryCfcOrd1-0}\\
&\mathcal{H}_{\mu_2}s_{1}={{{\kappa_2}}}h +{{\kappa_1}} q_0-\nu s_0 - H_{\mu_2}s_0,&
\label{EqDrvrCfcOrd1-0}
 \end{eqnarray}
Applying Lemma~\ref{LmmExst} to equation \eqref{EqPrdCfcOrd1-0} yields the following expression
\begin{eqnarray}
&p_0=\bar{p}\Phi(\vw_0,\mu_1),\quad  \bar{p}=\bar{p}(x,t)&
\label{P00}
\end{eqnarray}
where function $\bar{p}$ plays the part of  constant of the integration. 
Further,  considering the orthogonal projections of equations \eqref{EqDrvrCfcOrd1-0} and \eqref{EqPryCfcOrd1-0} onto the kernels of the operators adjoined to their left hand sides while keeping  equalities \eqref{S0} in mind leads to  the following identities
\begin{eqnarray}
&q_0=\bar{q}, \ \bar{q}=\bar{q}(x,t)&
\label{Q0}\\
&\bar {s}_{t} +\nu \bar{s}={{{\kappa_2}}}\bar{h} +{{\kappa_1}} {\bar{q}},\quad \bar{h}=Mh,&
\label{EqDrvrLdngSlw}
\end{eqnarray}
where function $\bar{q}$ is another `constant of integration'. Once equations \eqref{Q0} and \eqref{EqDrvrLdngSlw} have held  true, the system \eqref{EqDrvrCfcOrd1-0}-\eqref{EqPryCfcOrd1-0} allows for  solutions that read
\begin{eqnarray}
&
q_{1}=\bqx_1,\quad s_{1}=\tilde{s}_1+ \bar{s}_1,\quad \tilde{s}_1={{{\kappa_2}}}\mathcal{H}_{\mu_2}^{-1}\tilde{h},\quad \tilde{h}=h-\bar{h},\  
&
\label{Q1&S1}
\end{eqnarray}
where the mean values, $s_1$ and $\bqx_1$ remain unknown. Given this,  it seems convenient to rewrite  expression \eqref{P00} as follows
\begin{eqnarray}
&
p_0=\pbl=\bar{p}{\Phi}(\nabla\bar s+\wtlnbl\tilde{s}_1,\mu_1),\quad \bar{p}=\bar{p}(x,t),
&
\label{P0}
\end{eqnarray} 
Thus,  in the leading approximation, the predators respond to the short wave   external signal by a short wave pattern,  $p_0=\bar{p}{\Phi}(\nabla\bar s+\wtlnbl\tilde{s}_1,\mu_1)$,  the amplitude of which, $\bar{p}=\bar{p}(x,t)$, is nothing else than the mean predators density -- that is, $\langle p_0\rangle=\bar{p}$, that, in turn,  generally undergoes a slow modulation.   Additionally, the short-wave package, ${\Phi}(\nabla\bar s+\wtlnbl\tilde{s}_1,\mu_1)$, itself undergoes a slow modulation as it depends on  the slow part of the driver, $\bar{s}$, which, in turn, gets coupled to the leading mean value of the prey density, $\bar{q}$, by equation \eqref{EqDrvrLdngSlw}. The system  that governs the slow modulation of the leading approximation  turns out to be a byproduct of solving the equations that arise from  chain \eqref{EqPrdCfcnt}-\eqref{EqDrvrCfcnt} for $k=2$ and read 
 \begin{eqnarray}
& 
\mathcal{L}(\vw_0,\mu_1)p_1=p_0f_0-{H_{\mu_1}}p_{0}-{ \nabla\cdot}(p_0\vw_0)-{ {\wtlnbl}\cdot}(p_0\vw_1),\quad \vw_i={\wtlnbl}{s}_{i+1}+
{\nabla}{s}_{i},\ i=0,1,
&
\label{EqPrdCfcntK=2}\\
& {\mu\widetilde{\Delta}}{q}_{2}= q_{1\tau}-(\mu\Delta-\pr_t)\bar{q}-\bar{q}g_{0},&
\label{EqPryCfcntK=2}\\
&
\mathcal{H}_{\mu_2} s_{2}={{\kappa_1}} q_{1}-\nu s_{1}+\mu_2\Delta \bar{s}-{H}_{\mu_2}s_{1}.
&
\label{EqDrvrCfcntK=2}
 \end{eqnarray}
Deriving this system  has employed the identities ${\wtlnbl}{s}_{0}=0$ and ${\wtlnbl}{q}_{1}=0$, which follow from equations \eqref{S0} and \eqref{Q0}. The orthogonal projecting of equations \eqref{EqPrdCfcntK=2}-\eqref{EqDrvrCfcntK=2} onto the kernel of the operator adjoined to its right-hand side leads to the following equations
\begin{eqnarray}
& 
\bar{p}_t+{ \nabla\cdot}\left(\bar{p} \left(\nabla\bar {s} + \bar{\mathrm{V}}(\nabla\bar {s})\right)\right)=\bar{p}\bar{f}(\bar{p},\bar{q},\nabla\bar {s}),\quad \text{where}
\quad
&
\label{EqPrdLdngSlw}
\\
&
\bar{f}(\bar{p},\bar{q},\nabla \bar {s})\byd \langle {\astphi} f({\astphi}\bar{p},\bar{q})\rangle,\quad \bar{\mathrm{V}}(\nabla \bar{s})\byd \langle {\astphi}{\wtlnbl}\tilde{s}_1\rangle,\quad \tilde{s}_1={\kappa_2}\mathcal{H}_{\mu_2}^{-1}\tilde{h},
&
\label{DfF&V} 
\end{eqnarray}
where we have put 
\beq
{\astphi}={\Phi}(\nabla\bar{s}+\wtlnbl\tilde{s}_1,\mu_1),
\eeq
and  this notation is in use in what follows as well.
Further,
\begin{eqnarray}
&\bqx_{1\tau}=M^{\xi}\left(\bar{q}{g}_0 -(\pr_t-\mu\Delta)\bar{q}\right).&
\label{EqPryLdngSlw-0}
\\
&
\bar{s}_{1t} +\nu \bar {s}_1={{\kappa_1}} {\bar{q}}_{1}+\mu_2\Delta \bar{s},\ \text{where}\ {\bar{q}}_{1}=\langle q_1\rangle,\ {\bar{s}}_{1}=\langle s_1\rangle,
&
\label{EqDrvrSlwOrdr1}
\end{eqnarray}

The equation \eqref{EqPryLdngSlw-0}  gets compatible provided that 
\begin{eqnarray}
&(\pr_t-\mu\Delta)\bar{q}=\bar{q}\bar{g}(\bar{p},\bar{q},\nabla\bar {s}),\quad \text{where}\quad \bar{g}(\bar{p},\bar{q},\nabla\bar {s})=\langle g({\astphi}\bar{p},\bar{q})\rangle.&
\label{EqPryLdngSlw}
\end{eqnarray}
\begin{dfntn}\label{DfLdngSlwSst}
\textsf{\emph{ By the leading slow system, we mean the system of equations \eqref{EqPrdLdngSlw}- \eqref{EqPryLdngSlw} and \eqref{EqDrvrLdngSlw} relative to the mean values of the leading approximation, $\bar{p}=\bar{p}(x,t)$, $\bar{q}=\bar{q}(x,t)$ and $\bar{s}=\bar{s}(x,t)$.
}}
\end{dfntn}
\begin{rmrk}\label{RmOnMmr}
\emph{  The external signal influences the leading slow system by modifying both the   kinetics and tactical transport of the predators -- that is, by transforming   $f\to \bar{f}, g\to \bar{g}$ and by adding the drift term, ${ \nabla\cdot}(\bar {p}\bar{\mathrm{V}}(\nabla\bar s))$ to the  balance of the predator's mass . Hence,  the averaged transport and kinetics get nonlocal. A notable exception occurs provided that the kinetic under consideration is $p-$linear -- that is, functions $pf(p,q)$,  $qg(p,q)$ are linear (not necessarily homogeneous) in $p$. Then they persist in the homogenization so that $\bar f=f(p,q)$ and $\bar g=g(p,q)$. Class of $p-$linear kinetics includes the Lotka-Volterra one and several other classical models of the reactions,   \emph{cf.}  \cite{TtnTrFn}.}
\end{rmrk} 
The following theorem gives a summary of   the above analysis.  
  \begin{theorem}\label{PrpsLdngSlwSst}
\textsf{\emph{Let  the mean values of the leading approximation, $\bar{p}=\bar{p}(x,t), \bar{q}=\bar{q}(x,t)$, $\bar{s}=\bar{s}(x,t)$ deliver a solution to the leading slow system. Then the systems of equations arising from chain \eqref{EqPrdCfcnt}-\eqref{EqDrvrCfcnt} for $k=0,1$  allow for solutions that define coefficients $p_0,q_0,s_0$ completely, and coefficients $q_1,s_1$ up to their mean values, $\langle q_1\rangle$ and $\langle s_1\rangle$. More precisely, 
\beear
&
p_0=\bar{p}{\astphi},\quad s_0=\bar{s},\quad q_0=\bar{q},
&
\label{P0&Q0&S0}
\\
&
q_1=\tilde{q}_1+\bar{q}_1,\quad\quad \tilde{q}_1=\bar{q}\pr_\tau^{-1}( M^\xi g_0 - \bar{g}(\bar{p},\bar{q},\bar{s}) ),
\label{q1=}
\\
&
s_{1}=\tilde{s}_1+\bar{s}_1, \quad \tilde{s}_1={{{\kappa_2}}}\mathcal{H}_{\mu_2}^{-1}\tilde{h},\quad \tilde{h}=h-\langle h\rangle.
&
\label{s1=}
\eear
Moreover, equations \eqref{EqPrdCfcntK=2} and \eqref{EqPryCfcntK=2} get compatible, and their solutions, $p_1$,  and $q_2$, allow for the following representations
\beear
&p_1=\bar{p}_1{\astphi}+\mathcal{L}^{-1}\left(\left(I-M\right)\left(p_0f_0-{H_{\mu_1}}p_{0}-{ \nabla\cdot}(p_0\vw_0)\right) -{ {\wtlnbl}\cdot}(p_0\tilde{\vw}_1)+\bar{p}\nabla{\bar{s}}_1\widetilde\nabla \Phi(\vw_0,\mu_1) \right),
&
\label{p1=}\\
&\text{where}\  \mathcal{L}=\mathcal{L}(\vw_0),\quad \bar{p}_1=\langle p_1\rangle,\quad  \vw_0={\wtlnbl}{\tilde{s}}_{1}+\nabla{\bar{s}},\quad  \tilde{\vw}_1={\wtlnbl}{\tilde{s}}_{2}+{\nabla}{\tilde{s}}_{1},
&
\nonumber
\\
&
q_2=\bqx_2 + \bar{q}(\mu\widetilde{\Delta})^{-1}( g_0-M^\xi g_0).
\label{q2=}
&
\eear
Additionally, assume  equation \eqref{EqDrvrSlwOrdr1} to hold. Then
\beear
\tilde{s}_2=\mathcal{H}^{-1}_{\mu_2}\left({{\kappa_1}} \tilde{q}_{1}-\nu \tilde{s}_{1}-{H}_{\mu_2}\tilde{s}_{1}\right).
&
\label{s2=}
\eear
Projections $\bqx_2$, $\bar{s}_2$, $\bar{q}_1$,  $\bar{s}_1$, and   $\bar{p}_1 $ remain undetermined anyway.
}}
\end{theorem}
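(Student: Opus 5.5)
The proof is a direct verification that uses the triangular structure of chain \eqref{EqPrdCfcnt}--\eqref{EqDrvrCfcnt}. At every step we solve, in this order, the driver equation, then the prey equation, and only then the predator equation. Each time we invoke the Fredholm alternative on the torus $\mathbb{T}^{n+1}_{\xi,\tau}$: for $\mathcal{H}_{\mu_2}$ the kernel of the adjoint consists of constants, so solvability means zero mean. For $\widetilde\Delta$ (acting in $\xi$ only) the adjoint kernel consists of functions of $\tau$, so solvability means $M^\xi(\cdot)=0$. For $\mathcal{L}(\vw_0,\mu_1)$, Lemma~\ref{LmmExst} shows that $\ker\mathcal{L}^*$ consists of constants, so solvability again means zero mean, and the general solution is $\langle u\rangle\astphi$ plus $\mathcal{L}^{-1}$ applied to the right-hand side.

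For $k=0,1$ everything has essentially been done above. Equations \eqref{EqDrvrCfcOrd0} and \eqref{EqPryCfcOrd0} give \eqref{S0}. The $\tau$-dependence of $\bqx_0$ is removed as follows: \eqref{EqPryCfcOrd1-0} is solvable only if $M^\xi\mathcal{H}_\mu q_0=\pr_\tau\bqx_0=0$, which yields \eqref{Q0}. With this, \eqref{EqPryCfcOrd1-0} reduces to $\widetilde\Delta q_1=0$, hence $q_1=\bqx_1$. The zero-mean condition for \eqref{EqDrvrCfcOrd1-0} is exactly \eqref{EqDrvrLdngSlw}, and the remainder gives \eqref{s1=}. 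Lemma~\ref{LmmExst} then gives \eqref{P0}, i.e.\ \eqref{P0&Q0&S0}; note that $\bar s_1$ does not enter $\vw_0$, because $\wtlnbl\bar s_1=0$.

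Next comes $k=2$. For \eqref{EqPryCfcntK=2}, applying $M^\xi$ gives \eqref{EqPryLdngSlw-0}. Averaging further in $\tau$ shows that this equation can be integrated in $\tau$ if and only if \eqref{EqPryLdngSlw} holds, which is one of the hypotheses. Then $\tilde q_1=\pr_\tau^{-1}(\ldots)$, and we use \eqref{EqPryLdngSlw} to rewrite $(\pr_t-\mu\Delta)\bar q=\bar q\bar g$ inside it, which yields \eqref{q1=}. Subtracting the $M^\xi$-part and inverting $\mu\widetilde\Delta$ gives \eqref{q2=}. Here one has to check that $q_{1\tau}$ and the slow terms are $\xi$-independent, so that only $\bar q(g_0-M^\xi g_0)$ survives.

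For \eqref{EqPrdCfcntK=2}, the solvability condition is that the mean of its right-hand side vanishes. The terms ${H_{\mu_1}}p_0$ and $\nabla\cdot(p_0\vw_0)$ average to $\bar p_t+\nabla\cdot(\bar p\nabla\bar s+\bar p\langle\astphi\wtlnbl\tilde s_1\rangle)$, the fast divergence averages to zero, and the reaction term averages to $\bar p\bar f$. The mixed operator $\mathcal{D}$ also averages to zero, since it contains a fast divergence. So the solvability condition is precisely \eqref{EqPrdLdngSlw}. Then $p_1=\bar p_1\astphi+\mathcal{L}^{-1}(\text{RHS})$.

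To reach the form \eqref{p1=}, we split $\vw_1=\tilde\vw_1+\nabla\bar s_1+\wtlnbl\bar s_2$; the last term vanishes. The piece $-\wtlnbl\cdot(p_0\nabla\bar s_1)$ becomes $-\bar p\nabla\bar s_1\cdot\wtlnbl\astphi$. I expect the main obstacle here to be bookkeeping: the sign in \eqref{p1=} and the placement of $(I-M)$ must be reconciled by noting that the fast-divergence terms already have zero mean, so $(I-M)$ only needs to act on the remaining terms. Any apparent sign discrepancy would be checked against the convention that $\mathcal{L}$ contains $+\wtlnbl\cdot(u\vw)$.

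Finally, for \eqref{EqDrvrCfcntK=2} the zero-mean condition is \eqref{EqDrvrSlwOrdr1}, which is assumed. Subtracting it, and using that $\mathcal{H}_{\mu_2}^{-1}$ annihilates slow terms once their means are removed, gives \eqref{s2=}. The means $\bar p_1,\bar q_1,\bar s_1,\bqx_2,\bar s_2$ appear only as free kernel components, so they remain undetermined.
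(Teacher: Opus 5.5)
Your proposal is correct and follows essentially the paper's route. The theorem is just the summary of the preceding order-by-order analysis, in which the solvability conditions at $k=1,2$ (projections onto the adjoint kernels of $\mathcal{H}_{\mu_2}$, $\widetilde\Delta$ and, via Lemma~\ref{LmmExst}, $\mathcal{L}(\vw_0,\mu_1)$) are identified with the leading slow system and with \eqref{EqDrvrSlwOrdr1}. On the sign you were unsure about, trust your computation: \eqref{EqPrdCfcntK=2} gives $-\bar p\,\nabla\bar s_1\cdot\wtlnbl\astphi$ inside $\mathcal{L}^{-1}$, and no convention for $\mathcal{L}$ changes that; likewise \eqref{EqPryCfcntK=2} leaves $\mu\widetilde\Delta q_2=-\bar q\,(g_0-M^\xi g_0)$, so the plus signs printed in \eqref{p1=} and \eqref{q2=} are slips in the statement, not gaps in your argument.
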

\subsubsection{Example: drifting due to a stationary external signal}
\label{SsscDrft}
\noindent 
The notation of $\tilde{s}$ is in use in place of  $\tilde{s}_1$ up to the end of this sub-subsection. Also,  the use of  notation $\mathcal{L}=\mathcal{L}(\bar{\vu}+\tilde{\nabla}\tilde{s},\mu_1)$, where $\bar{\vu}$ is a time-dependent vector field in the slow variables only,   means the action of this operator restricted to the  functions in fast variable $\xi$ only. We set
\beear
\efr\byd\exp(\tilde{s}/\mu_1),\quad \efr_\ast\byd \efr\langle \efr\rangle^{-1}.
\label{DfExpFn} 
\eear
 Then the equation for finding $\astphi={\Phi}(\bar{\vu}+\wtlnbl\tilde{s},\mu_1)$ -- that is, equation  $\mathcal{L}\astphi=0$, reads 
\beear
\wtlnbl\cdot(\astphi(\nabla \tilde{s}+\bar{\vu})-\mu_1\wtlnbl\astphi)=0.
\label{EqPhiStd}
\eear
The ansatz 
\beq
\astphi=\efr\psi\langle\efr\psi\rangle^{-1},
\eeq 
brings it  into equation 
\beear
\wtlnbl\cdot(\efr(\psi\bar{\vu}-\mu_1\nabla\psi))=0. 
\label{EqPsi}
\eear
Let $\bar{\vu}=0$. Then equation \eqref{EqPsi} entails ${\Phi}(\wtlnbl\tilde{s},\mu_1)=\efr_\ast$, and
\beq
\bar{\mathrm{V}} (0)= \langle \efr_*\widetilde{\nabla}\tilde{s}\rangle=\mu_1\langle\widetilde{\nabla}\efr_\ast\rangle=0.
\eeq
Thus, $\bar{\mathrm{V}}(0)=0$ provided that the external signal is stationary and the  driver's intensity is everywhere constant. This is not the case generally,  as there is a residual drift -- that is, $\bar{\mathrm{V}}(0)\neq 0$ for some non-stationary signals
 even if the  driver's intensity is everywhere constant, c.f. sub-subsection~\ref{SsscTrWv}. 

There is a class  of external signals, which allows for explicit expression for the drift, $\mathrm{\bar{V}}$. These  are the following ones 
\beear
&
\tilde{h}=\tilde{h}_1(\xi_1)+\ldots+\tilde{h}_{n}(\xi_n)\Rightarrow \tilde{s}=\tilde{s}_1(\xi_1)+\ldots+\tilde{s}_{n}(\xi_n),
&
\label{EqH=SumHiStd}
\eear
as $\tilde{s}={{{\kappa_2}}}\mathcal{H}_{\mu_2}^{-1}\tilde{h}$ ( \emph{cf.}  formulae \eqref{Ntn3} and \eqref{Q1&S1}). Although the signals obeying condition \eqref{EqH=SumHiStd} are  steady with respect to the fast time, they  may depend on the slow variables.  We do not indicate this explicitly for simplifying the notation. The solution to equation \eqref{EqPsi} reads
\beear
\psi=\psi_1(\xi_1)\ldots\psi_n(\xi_n), \quad \psi_i(y)=(\bar{u}_i-\mu_1\pr_{y})^{-1}\efr^{-1}_i(y),\quad  \efr_i(y)=\exp(\tilde{s}_i(y)/\mu_1),\ y\in \mathbb{R}.
\label{EqPsi-I}
\eear
Further, $\langle\efr\psi\rangle\mathrm{\bar{V}}(\bar{\vu})=\langle\efr\psi\wtlnbl\tilde{s}\rangle=\mu_1\langle\psi\wtlnbl\efr\rangle$ the by definitions of this field and function $\tilde{s}$, hence,
\beear
& 
\mathrm{\bar{V}}(\bar{\vu})=-\frac{\mu_1\langle \efr \wtlnbl\psi\rangle}{\langle\efr\psi\rangle}\ \text{-- that is,}\quad   \mathrm{\bar{V}}_i(\bar{\vu})=-\frac{\mu_1\langle \efr_i\pr_i\psi_i\rangle}{\langle \psi_i{\efr}_i\rangle},\ \quad\text{where}\ \pr_i\byd \pr_{\xi_i},\ i=1..n.
&
\eear
This allows for simplifying  as follows
\beear
\mathrm{\bar{V}}_j(\bar{\vu})=\frac{1}{\langle \efr_j \psi_j\rangle}-\bar{u}_j,\quad j=1..n. 
\label{EqDrftVlct=}
\eear
Indeed, 
$
\mu_1\langle \efr_j\pr_j \psi_j\rangle=
\mu_1\langle \efr_j\pr_j(\bar{u}_j-\mu_1\pr_j)^{-1}\efr^{-1}_i\rangle=
$
\beear &
=\langle \efr_j(\bar{u}_j-\mu_1\pr_j)^{-1}(\bar{u}_j+\mu_1\pr_j-\bar{u}_j)\efr^{-1}_i\rangle=
-1+\bar{u}_j\langle \efr_j(\bar{u}_j-\mu_1\pr_j)^{-1}\efr^{-1}_i\rangle=-1+\bar{u}_j\langle \efr_j \psi_j\rangle
\eear
The use of formulae \eqref{EqDrftVlct=} puts equation \eqref{EqPrdLdngSlw} into the following form
\beear
\bar{p}_t+\nabla\cdot(\bar{p}\bar{\vv})=\bar{p}\bar{f}(\bar{p},\bar{q},\nabla\bar{s}),\ 
\bar{v}_j=\frac{1}{\langle \efr_j (\bar{s}_{{x}_j}-\mu_1\pr_{\xi_j})^{-1}\efr^{-1}\rangle}.\label{EqPrdLdngStdy}
\eear
\begin{rmrk}\label{RmOnPole}
\emph{Under the  periodicity conditions, the origin delivers  a simple pole to  the resolvent of operator $\pr$. This makes formulae \eqref{EqDrftVlct=} valid  even when the slow velocity field, $\bar{\vu}$, takes values within the full neighbourhood of zero (provided it is small sufficiently). In particular,  $\bar{V}_j(0)=0$, and   $\bar{s}_{{x}_j}=0$ entails $\bar{v}_j=0$ in equation \eqref{EqPrdLdngStdy}. Moreover,
\beear
\lambda\langle \efr_j (\lambda-\mu_1\pr_{\xi_j})^{-1}\efr_j^{-1}\rangle\to \langle \efr_j \rangle\langle\efr_j^{-1}\rangle\ \text{for}\ \lambda\to 0.
\eear
Since $\langle \efr_j \rangle\langle\efr_j^{-1}\rangle>1$ by the exponent convexity, formulae \eqref{EqPrdLdngStdy} entails inequality  $\bar{v}_j/\bar{s}_{x_j} \le 1$ for small $\bar{s}_{x_i} $ which reveals suppressing of the taxical transport. }
\end{rmrk}
\begin{rmrk}\label{RmOnExpSprss}
\emph{If we set $h_i=A_{0i}h_{0i}$ for some $i=1..n$, where the notation of $h_{0i}$ is for a function of the unit $\mathrm{L}_2$-norm, then it is logical to consider quantity $A_i\byd A_{0i}\kappa_2/\mu_2$ as  the effective amplitude of $i-$th  summand  in the external signal specified by equality \eqref{EqH=SumHiStd}.  The aforementioned transport suppressing gets an exponential sharpness  when $A_i\to\infty$ as it follows from Laplace's asymptotic for the mean values, $\langle\efr_i\rangle$ and $\langle\efr^{-1}_i\rangle$.}
\end{rmrk}
\subsection{Senior approximations}
\label{SScSnrApprx}
\noindent
Here we restore the notation of $\tilde{s}_1$ in  agreement with equation \eqref{Q1&S1} again.  
For   the layout compactness,  we'll be  using  a notation of $\mathrm{Op}_m$, $m=1,2,3\dots$, in place of  every operation (maybe, differential or non-local) over only those  coefficients  $p_k,q_k,s_k$ that are indexed by $k\le m$. For instance, 
\begin{eqnarray}
&
\sum\limits_{j=0..m}p_jf_{m-j}-a_1(p_{m},q_{m})=\sum\limits_{j=1..m-1}p_jf_{m-j}=Op_{m-1},
&
\nonumber\\
&
\sum\limits_{j=0..m}q_jg_{m-j}-a_{2}(p_{m},q_{m})=\sum\limits_{j=1..m-1}q_jg_{m-j}=Op_{m-1},
&
\nonumber
\end{eqnarray}
if we set
\begin{eqnarray}
%&a_{1,m}\byd \sum\limits_{j=1..m-1}p_jf_{m-j}=Op_{m-1},\quad %a_{2,m}\byd\sum\limits_{j=1..m-1}s_jg_{m-j} = Op_{m-1},
&
a_{1}({\zeta}_1,{\zeta}_2)=d(pf(p,q))({\zeta}_1,{\zeta}_2)\left|_{p=p_0,q={q}_0}\right.,\quad
a_{2}({\zeta}_1,{\zeta}_2)=d(qg(p,q))({\zeta}_1,{\zeta}_2)\left|_{p=p_0,q={q}_0}\right.
&
\label{Df-a-LnFn}
\end{eqnarray} 
Besides, it follows from equations \eqref{EqPryCfcnt} and \eqref{EqDrvrCfcnt}, that
\beear
&
\tilde{{s}}_m=Op_{m-1},\quad \wqx_m=Op_{m-1},\ \wqta_m=Op_{m-1}, \tilde{q}_{m}=\wqx_m+\wqta_m=Op_{m-1}.
&
\label{TldSm=&TldQm=}
\eear
At the same time,
\beq
\pcr_{m}=\bar{p}\mathcal{L}^{-1}(\vw_0,\mu_1){\wtlnbl}\astphi\nabla \bar{s}_{m} +Op_{m-1}.
\label{PCRm=}
\eeq
Indeed, it's obvious that
\beear
&
\pcr_{m}=\mathcal{L}^{-1}(\vw_0,\mu_1)((\bar{p}{\wtlnbl}\astphi\nabla \bar{s}_{m}) + {\wtlnbl}\cdot(p_0{\wtlnbl}\tilde{s}_{m+1}))+Op_{m-1},
&
\label{TldPm}
\eear
and the second summand herein allows for identifying with $Op_{m-1}-$expression too, since 
\beq
\tilde{s}_{m+1}=\mathcal{H}^{-1}_{\mu_2} ({{\kappa_1}} \tilde{q}_{m}-\nu \tilde{s}_{m}+\mu_2\Delta s_{m-2}-{H}_{\mu_2}\tilde{s}_{m})=Op_{m-1}.
\eeq
Additionally,  define pair of functions, $\bar{a}_i:\mathbb{R}^2\to \mathbb{R}$, $i=1,2$  and pair of vector fields, $\vb_i$, $i=1,2$ by setting
\begin{eqnarray}
& \bar{a}_i({\zeta}_1,{\zeta}_2)=\langle a_{i}(\astphi{\zeta}_1,{\zeta}_2)\rangle \ \ i=1,2,%\quad \astphi=\Phi_0(\nabla\bar{s}). \widetilde{a}_i({\zeta}_1,{\zeta}_2)=a_i({\zeta}_1,{\zeta}_2)-\bar{a}_i({\zeta}_1,{\zeta}_2), &
&
\label{DfBr-a-LnFn}\\
&
\bar{\vb}_i\nabla\bar{\psi}= \bar{p}\langle a_i(\mathcal{L}^{-1}(\vw_0,\mu_1){\wtlnbl}\cdot(\astphi\nabla\bar{\psi}),0)\rangle,\ i=1,2,
&
\label{DfBrB-Fld}
\end{eqnarray}
where functions  $a_i$ have to be evaluated by formulae \eqref{Df-a-LnFn} for $p_0=\bar{p}\astphi,q_0=\bar{q}$ and the notation of  $\psi$ is for an arbitrary smooth function in the slow variables, $x,t$.  
\begin{rmrk}\label{RmOnP-Lnr}
\emph{Vector fields $\vb_i$, $i=1,2$, get vanished when the specified kinetic is $p-$linear.  Indeed, if it is so, then  $\langle a_i(v\zeta_1,0)\rangle=\bar{v}a_i(\zeta_1,0)$, $i=1,2$, where the notation of $v$ is for an arbitrary function in the fast variables. Hence, the averages are zeroes in  formula \eqref{DfBrB-Fld}. A similar argument shows that $\bar{a}_i =a_i$, $i=1,2$, where the notation of $a_i$ is for the linear functions defined in \eqref{Df-a-LnFn} for $p_0=\bar{p},\,q_0=\bar{q}$,  \emph{cf.}   Remark~\ref{RmOnMmr}.}
 \end{rmrk}
\begin{dfntn}\label{DfSnrSlwSst}
 \textsf{\emph{By the slow system of order  $\ell=1,2,\ldots$ we mean the system of equations relative to unknown functions $\bar{p}_\ell,{\bar{q}}_\ell,\bar{s}_\ell$ in the slow variables, $x,t$, that reads
 \begin{eqnarray}
& 
\bar{p}_{\ell,t}+{ \nabla\cdot}(\bar{p}_{\ell}\bar{\vv}+\bar{p}\nabla\bar{s}_{\ell})
-\bar{a}_1(\bar{p}_{\ell},\bar{q}_{\ell})-\bar{\vb}_1\nabla \bar{s}_\ell=\mu_1\Delta\bar{p}_{k-3}+  \left\langle a_1({\breve{\pcr_\ell}},\tilde{q}_{\ell})-{ \nabla\cdot}( {p}_0\tilde{\vw}_{\ell})\right\rangle 
&
\label{EqSlwPrdCfcnt}
\\
&
\sum\limits_{j=1..\ell-1}\left\langle q_jf_{\ell-j}-{ \nabla\cdot}(p_j{\vw}_{\ell-j})\right\rangle,
&
\nonumber\\
&
(\pr_t-\mu\Delta)\bar{q}_{\ell}-\bar{\vb}_2\nabla\bar{s}_\ell= \left\langle a_2({\breve{\pcr_\ell}},\tilde{q}_{\ell})+\sum\limits_{j=1..\ell-1}q_jg_{\ell-j}\right\rangle,
&
\label{EqSlwPryCfcnt}\\
&
\bar{s}_{\ell,t}+\nu \bar{s}_{\ell}-{{\kappa_1}} \bar{q}_{\ell}=\mu_2\Delta \bar{s}_{\ell-1}
&
\label{EqSlwDrvrCfcnt}\\
&
\text{where}\quad \bar{\vv}=\nabla\bar{s}+\bar{\mathrm{V}}(\nabla\bar{s}),\ \vw_{m}=\nabla s_m+{\wtlnbl} s_{m+1}, \quad {\breve{\pcr_\ell}}=\pcr_\ell-\bar{p}\mathcal{L}^{-1}(\vw_0,\mu_1){\wtlnbl}\astphi\nabla \bar{s}_{\ell}.
&\nonumber
\end{eqnarray}}}
\end{dfntn}
\begin{rmrk}\label{RmLdSlw}
\emph{The right hand sides in the equations of the leading slow system are  $Op_{\ell-1}$-expressions.}
\end{rmrk}
\begin{theorem}\label{ThSnrApprxm}
  \textsf{\emph{Let $\ell>0$, and  for every $m=0..\ell-1$ the triad of coefficients $p_{m},q_{m},s_{m}$ in expansion \eqref{AsmpExpnsn} is known  and such that (a) the triad of their mean values, $\bar{p}_{m},\bar{q}_{m},\bar{s}_{m}$, delivers a solution to the slow system of order $m$ (where $m=0$ corresponds to the leading slow system); (b)  triad $p_{m-1},q_{m},s_{m}$   obeys  the systems of equations \eqref{EqPrdCfcnt}-\eqref{EqDrvrCfcnt} with $k=m$. Finally, assume the slow system of order $\ell$ to allow for a  solution $\bar{p}_{\ell},\bar{q}_{\ell},\bar{s}_{\ell}$. Then coefficients $p_{\ell},q_{\ell},s_{\ell}$ get defined completely and in such a way that
   \\
   (i) triad $p_{\ell-1},q_{\ell},s_{\ell}$ solves the system of equations  \eqref{EqPrdCfcnt}-\eqref{EqDrvrCfcnt} for $k=\ell$, and 
    \begin{eqnarray*}
& 
p_{\ell-1}=\bar{p}_{\ell-1}\astphi+\bar{p}\mathcal{L}^{-1}(\vw_0,\mu_1){ {\wtlnbl}\cdot}(\astphi\nabla\bar{s}_{\ell-1})+Op_{\ell-2},
\quad 
{q}_{\ell}=\bar{q}_{\ell}+Op_{\ell-2},
\quad
s_{\ell}=\bar{s}_{\ell}+Op_{\ell-2}.
&
 \end{eqnarray*}
    (ii) for $k=\ell +1$ the system of equations \eqref{EqPrdCfcnt}-\eqref{EqDrvrCfcnt} allows for solution  $p_{\ell},q_{\ell+1},s_{\ell+1}$, 
 \begin{eqnarray*}
& 
p_{\ell}=\bar{p}_{\ell}\astphi+\bar{p}\mathcal{L}^{-1}(\vw_0,\mu_1){ {\wtlnbl}\cdot}(\astphi\nabla\bar{s}_{\ell})+Op_{\ell-1},
\quad 
{q}_{\ell+1}=\bar{q}_{\ell+1}+Op_{\ell-1},
\quad
s_{\ell+1}=\bar{s}_{\ell+1}+Op_{\ell-1}.
&
 \end{eqnarray*}
  where the mean values $\bar{q}_{\ell+1},\bar{s}_{\ell+1}$  of  coefficients $q_{\ell+1},s_{\ell+1}$ remain undetermined.
  \\
  (iii) for $k=l+2$  the equations  \eqref{EqPrdCfcnt}-\eqref{EqPryCfcnt} allow  for isolating   the following unknowns 
  \begin{eqnarray*}
& 
p_{\ell+1}=\bar{p}_{\ell+1}\astphi+\bar{p}\mathcal{L}^{-1}(\vw_0,\mu_1){ {\wtlnbl}\cdot}(\astphi\nabla\bar{s}_{\ell+1})+Op_{\ell},
\quad 
{q}_{\ell+2}=\bar{q}_{\ell+2}+Op_{\ell},
 \end{eqnarray*}
 where the mean values $\bar{q}_{\ell+2}$, $\bar{s}_{\ell+1}$ and $\bar{p}_{\ell+1}$ remain undetermined. }}
\end{theorem}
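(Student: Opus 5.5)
We argue by induction on $\ell$, with Theorem~\ref{PrpsLdngSlwSst} serving as the base case. That theorem yields $p_0,q_0,s_0$ completely, $q_1,s_1$ up to their means, and $p_1,q_2$ up to $\bar p_1,\bar q_2$. At each step the system \eqref{EqPrdCfcnt}--\eqref{EqDrvrCfcnt} is used in its triangular form. First, \eqref{EqDrvrCfcnt} is solved for $s_k$ with the heat operator $\mathcal{H}_{\mu_2}$. Next, \eqref{EqPryCfcnt} is solved for $q_k$ with $\mu\widetilde\Delta$, using first the $\tau$-averaged and then the $\xi$-averaged compatibility. Finally, \eqref{EqPrdCfcnt} is solved for $p_{k-1}$ with $\mathcal{L}(\vw_0,\mu_1)$ via Lemma~\ref{LmmExst}.

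For each operator the solvability condition is orthogonality of the right-hand side to the kernel of the adjoint. For $\mathcal{H}_{\mu_2}$ and $\mathcal{L}$ this kernel consists of the constants (proof of Lemma~\ref{LmmExst}), so the condition is that the full average vanishes. For $\mu\widetilde\Delta$ it is that $M^\xi$ of the right-hand side vanishes. Whenever such a condition holds, the solution is the mean (respectively $\langle\cdot\rangle\astphi$ for $\mathcal{L}$) plus the right inverse of Remark~\ref{RmOnInvrs} applied to the right-hand side.

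\emph{Step (i).} The hypotheses give $p_m,q_m,s_m$ for $m\le\ell-1$. The fluctuation parts $\tilde s_\ell,\tilde q_\ell$ are then $Op_{\ell-1}$ by \eqref{TldSm=&TldQm=}. One has to check that they are in fact $Op_{\ell-2}$ apart from the mean contributions, which follows from the structure of \eqref{EqPryCfcnt}--\eqref{EqDrvrCfcnt} at level $\ell$, whose right-hand sides involve only indices $\le\ell-1$ and the slow means. The formula for $p_{\ell-1}$ is \eqref{TldPm} with $m=\ell-1$, combined with $\tilde s_\ell=Op_{\ell-2}$.

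\emph{Step (ii).} We take $k=\ell+1$. The average of \eqref{EqDrvrCfcnt} gives exactly \eqref{EqSlwDrvrCfcnt} at order $\ell$, which holds by assumption, so $\tilde s_{\ell+1}=\mathcal{H}_{\mu_2}^{-1}(\cdots)=Op_{\ell-1}$. For \eqref{EqPryCfcnt}, the $M^\xi$-projection yields a $\pr_\tau$-equation for $\overline{q}^\xi_\ell$. Its own compatibility, namely the vanishing of the full average, is precisely \eqref{EqSlwPryCfcnt}. To see this, expand $\sum_j q_jg_{\ell-j}$, isolate the linear part $a_2(p_\ell,q_\ell)$, and substitute $p_\ell^\circ$ from \eqref{PCRm=}. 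The term $\bar p\mathcal{L}^{-1}\wtlnbl\cdot(\astphi\nabla\bar s_\ell)$ then produces $\bar{\vb}_2\nabla\bar s_\ell$ via \eqref{DfBrB-Fld}, the $\bar p_\ell\astphi$ part produces $\bar a_2$, and the remainder is $\breve{\pcr_\ell}$. This defines $\wqta_\ell$, and afterwards $\wqx_{\ell+1}=(\mu\widetilde\Delta)^{-1}(\cdots)=Op_{\ell-1}$.

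The same bookkeeping applies to \eqref{EqPrdCfcnt} with $k=\ell+1$. There the full average kills $\wtlnbl\cdot$-terms and $\partial_\tau$, leaving $\bar p_{\ell,t}$, the slow divergence $\nabla\cdot\langle p_0\vw_\ell+p_\ell\vw_0\rangle$ and the kinetic terms. Using $\langle\astphi\vw_0\rangle=\nabla\bar s+\bar{\mathrm V}$ recovers the term $\nabla\cdot(\bar p_\ell\bar\vv+\bar p\nabla\bar s_\ell)$, and this gives \eqref{EqSlwPrdCfcnt}. Hence $p_\ell=\bar p_\ell\astphi+\mathcal{L}^{-1}(\ldots)$, with the stated form following from \eqref{TldPm}.

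\emph{Step (iii)} repeats Step (ii) at $k=\ell+2$, but only for \eqref{EqPrdCfcnt}--\eqref{EqPryCfcnt}, whose right-hand sides now contain only the already isolated $p_\ell,q_{\ell+1},s_{\ell+1}$ up to the free means. Solvability of the $\mathcal{L}$-equation and of the $\xi$-part of the $q$-equation, before the slow system of order $\ell+1$ is imposed, requires only that the non-averaged parts be isolated. The means $\bar p_{\ell+1},\bar q_{\ell+2},\bar s_{\ell+1}$ are left free.

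The main obstacle is the identification in Step (ii): showing that the averaged compatibility conditions coincide term by term with Definition~\ref{DfSnrSlwSst}. In particular, the contribution of $p_\ell^\circ$ depending on $\nabla\bar s_\ell$ has to be separated correctly through \eqref{PCRm=}, and everything else has to be confirmed as $Op_{\ell-1}$. I would first verify this for $\ell=1$ against \eqref{p1=}, and then check that the index shifts in \eqref{EqPrdCfcnt} (the $\vw_{k-1-j}$ terms) produce no hidden dependence on $\bar s_{\ell+1}$ beyond $\wtlnbl$-derivatives, which average to zero.
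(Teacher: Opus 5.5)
\textbf{Gap: the compatibility conditions are shifted by one level for the $p$- and $q$-equations.}

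Average \eqref{EqPrdCfcnt} at level $k$. You get $\bar p_{k-2,t}$ from $H_{\mu_1}p_{k-2}$, the kinetic average $\langle\sum_{j\le k-2}p_jf_{k-2-j}\rangle$, and $\nabla\cdot\langle\sum_{j\le k-2}p_j\vw_{k-2-j}\rangle$. That is the slow $p$-equation of order $k-2$. Likewise, the $M^\xi$-projection of \eqref{EqPryCfcnt} at level $k$ is a $\pr_\tau$-equation for $\bqx_{k-1}$, and its own solvability is the slow $q$-equation of order $k-2$. Only \eqref{EqDrvrCfcnt} gives an equation of order $k-1$, and that part of your Step (ii) is correct.

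So at $k=\ell+1$ the $p$- and $q$-compatibilities are the order-$(\ell-1)$ equations, already supplied by hypothesis (a). They are not \eqref{EqSlwPrdCfcnt}--\eqref{EqSlwPryCfcnt} of order $\ell$. The terms you list in Step (ii) all belong to $k=\ell+2$: $\bar p_{\ell,t}$, $\nabla\cdot\langle p_0\vw_\ell+p_\ell\vw_0\rangle$, and $a_2(p_\ell,q_\ell)$ from $\sum_j q_jg_{\ell-j}$. Your text is internally inconsistent here: you say the $\pr_\tau$-equation at $k=\ell+1$ is for $\bqx_\ell$, and that equation cannot contain $a_2(p_\ell,q_\ell)$. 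The $\ell=1$ check you propose would have exposed this. In Theorem~\ref{PrpsLdngSlwSst}, the $k=2$ equations yield \eqref{EqPrdLdngSlw} and \eqref{EqPryLdngSlw}, which are of order $0$, together with \eqref{EqDrvrSlwOrdr1}, which is of order $1$.

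\textbf{Consequence: Step (iii) is false as stated.} You claim that at $k=\ell+2$ the equations for $p_{\ell+1}$ and $q_{\ell+2}$ are solvable once the non-averaged parts are isolated. That contradicts the Fredholm alternative you invoked yourself via Lemma~\ref{LmmExst}.
\begin{itemize}
\item The equation $\mathcal{L}(\vw_0,\mu_1)p_{\ell+1}=F$ requires $\langle F\rangle=0$.
\item The $q_{\ell+2}$-equation requires $M^\xi$ of its right-hand side to vanish. That in turn requires the full average of the $\pr_\tau$-equation for $\bqx_{\ell+1}$ to vanish.
\end{itemize}
These two conditions are exactly \eqref{EqSlwPrdCfcnt} and \eqref{EqSlwPryCfcnt} of order $\ell$. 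This is where solvability of the order-$\ell$ slow system is actually used. It is also where splitting $\pcr_\ell$ into $\bar p\mathcal{L}^{-1}(\vw_0,\mu_1)\wtlnbl\cdot(\astphi\nabla\bar s_\ell)$ plus $\breve{\pcr_\ell}$ produces $\bar{\vb}_{1,2}\nabla\bar s_\ell$ and $\bar a_{1,2}(\bar p_\ell,\bar q_\ell)$, as you describe. Within your own indexing the level-$(\ell+2)$ conditions would be of order $\ell+1$. That order is not assumed, so you could not isolate $p_{\ell+1}$ at all.

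\textbf{What the paper does.} The paper's proof is exactly the corrected bookkeeping. Level-$k$ projections give the slow $p$- and $q$-equations of order $k-2$ and the slow $s$-equation of order $k-1$. Hypothesis (a) at $m=\ell-1$, together with the order-$\ell$ slow system, therefore covers every condition arising at $k=\ell+1$ and $k=\ell+2$. The triangular system is then solved with the right inverses, and shifting the index down by $1$ and $2$ gives (i) and (ii). With that reassignment your outline goes through. The induction on $\ell$ is harmless but unnecessary, since the hypotheses already supply all lower orders.
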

{\textbf{Proof.}}
The orthogonal projecting of equations \eqref{EqPrdCfcnt}-\eqref{EqDrvrCfcnt}  onto the kernels of the operators adjoined to their right-hand sides leads to the following equalities
\begin{eqnarray}
& 
\bar{p}_{k-2,t}+{ \nabla\cdot}\langle p_{k-2}\vw_{0}+p_0\vw_{k-2}\rangle
=
&
\label{EqSlwPrdCfcnt-0}\\ 
&
\left\langle a_1(p_{k-2},q_{k-2}) + \sum\limits_{j=1..k-3}q_jf_{m-j}\right\rangle, \quad \text{where}\quad \vw_{m}=\nabla s_{m}+{\wtlnbl}{s}_{m+1},
&
\nonumber\\
&\pr_\tau \bqx_{k-1}= M^\xi\left((\mu\Delta-\pr_t)\bqx_{k-2}-a_2(p_{k-2},q_{k-2})-\sum\limits_{j=1..m-1}q_jg_{m-j}\right),&
\label{EqSlwPryCfcnt-0}
\\
&
\bar{s}_{k-1,t}+\nu \bar{s}_{k-1}= {{\kappa_1}} \bar{q}_{k-1}+\mu_2\Delta \bar{s}_{k-2}.
&
\label{EqSlwDrvrCfcnt-0}
\end{eqnarray}
Comparing this system  to  its predecessors  shows that equation \eqref{EqSlwPrdCfcnt-0} in fact coincides with equation \eqref{EqSlwPrdCfcnt} of the slow system or order $\ell=k-2$,  the compatibility condition for equation \eqref{EqSlwPryCfcnt-0} coincides with   equation \eqref{EqSlwPryCfcnt} of the slow system or order $\ell=k-2$,  and equation \eqref{EqSlwDrvrCfcnt-0} coincides with  equation \eqref{EqSlwDrvrCfcnt} of order $\ell=k-1$. Hence, resolving the slow system of order $\ell$ enables us for isolating unknown $q_{l+2}$ from equation \eqref{EqPryCfcnt} with $k=\ell+2$, next $s_{\ell+1}$ from \eqref{EqDrvrCfcnt} with $k=\ell+1$, and finally $p_{\ell+1}$ from 
\eqref{EqPrdCfcnt} with $k=\ell+2$, and then  put them into the forms as follows
\begin{eqnarray}
& 
p_{\ell+1}=\pbl_{\ell+1}+\mathcal{L}^{-1}(\vw_0,\mu_1)( \mu_1\Delta p_{\ell-1}-{H_{\mu_1}}p_{\ell}+
&
\nonumber\\
&
\sum\limits_{j=0..\ell}(p_jf_{\ell-j}-{ \nabla\cdot}(p_j\vw_{\ell-j})   -{ {\wtlnbl}\cdot}(p_j\vw_{\ell+1-j}))),
&
\label{PL+1=}\\ 
&
\text{where}\quad \vw_{m}=\nabla s_{m}+{\wtlnbl}{s}_{m+1},\ \pbl_{\ell+1}=\bar{p}_{\ell+1}{\Phi}(\nabla\bar{s}+\wtlnbl\tilde{s}_1,\mu_1),
&
\nonumber
\\
&
{q}_{\ell+2}=\bar{q}_{\ell+2}+\pr_\tau^{-1}M^\xi
\left((\mu\Delta-\pr_t)q_{\ell}-\sum\limits_{j=0..\ell}q_jg_{\ell-j}\right)+
&
\label{QL+2=}
\\
&
(\mu\widetilde{\Delta})^{-1}\left((\pr_\tau-\mu\mathcal{D}) q_{\ell+1} -(\mu\Delta-\pr_t)q_{\ell}-\sum\limits_{j=0..\ell}q_jg_{\ell-j}\right),
&
\nonumber 
\\
&
s_{\ell+1}=\bar{s}_{\ell+1}+\mathcal{H}^{-1}_{\mu_2}\left({{\kappa_1}} q_{\ell}-\nu s_{\ell}+\mu_2\Delta s_{\ell-1}-{H}_{\mu_2}s_{\ell}\right).
&
\label{SL+1=}
 \end{eqnarray}
 From equations  \eqref{PL+1=}-\eqref{SL+1=}, it follows that
 \begin{eqnarray}
& 
p_{\ell+1}=\bar{p}_{\ell+1}\astphi+\bar{p}\mathcal{L}^{-1}(\vw_0,\mu_1){ {\wtlnbl}\cdot}(\astphi\nabla\bar{s}_{\ell+1})+Op_{\ell},
&
\label{PL+1=!}\\ 
&
{q}_{\ell+2}=\bar{q}_{\ell+2}+Op_\ell,
&
\label{QL+2=!}
\\
&
s_{\ell+1}=\bar{s}_{\ell+1}+Op_{\ell-1}.
&
\label{SL+1=!}
 \end{eqnarray}
From equalities  \eqref{PL+1=!}-\eqref{SL+1=!} written for the index,  $\ell$,  shifted  down by 2, it follows that $p_{\ell-1}$ and $q_{\ell}$ completely determined under the assumptions of the theorem. In the same places,  shifting the same index  down by 1 similarly leads to getting unknowns $s_{\ell}$ and $p_{\ell}$ determined completely. Also,  $q_{\ell+1}$ is determined up to its mean value. Finally, with no shifting of the index, we see that $q_{\ell+2}$ and $s_{\ell+1}$ allow for determining  up to their mean values, and  $p_{\ell+1}$ allows for representation \eqref{PL+1=!} (where the 2 first terms remain undetermined). Hence, all the assertions of the theorem holds true under the assumptions stated therein.
$\square$ \\
\begin{rmrk}\label{RmOnLnrPrcss}
 \emph{For an arbitrarily specified  approximation order,  the asymptotic solution \eqref{AsmpExpnsn}  is attainable by  an iterative process,  which consists in successive solving  the slow  systems and  systems \eqref{EqPrdCfcnt}-\eqref{EqDrvrCfcnt}.  These are linear systems, except for the leading slow one.}
\end{rmrk}
\subsection{Quasi-equilibria}\label{SscQsEq}
\noindent 
In analogy to Kapitza's theory of the upside-down pendulum \cite{LndLfs}, the issue of the imposed stability or instability regards the special solutions called quasi-equilibria. These are  the short-wave patterns that do not undergo a slow modulation.  Every quasi-equilibrium  allows for identifying with a suitable equilibrium of the leading slow system, and we can address the stability of the latter one applying the common techniques. 

\noindent
By equilibria of system  \eqref{EqPrdTrns}-\eqref{EqSgnl} we mean special solutions such that all the species densities, and  hence  the driver intensity  are identically  constant -- that is,
\begin{eqnarray}&\label{EqlbrDnst}
  {s}= {s}_e\equiv\const,\quad {q}= {q}_e\equiv \const,\quad  {p}= {p}_e \equiv \const. 
&\end{eqnarray}
Feasibility for the equilibria is natural  for a  system which possesses the total translational invariance. Regarding the system \eqref{EqPrdTrns}-\eqref{EqSgnl},  such  an invariance presumes the external signal intensity, ${h}$,  to be a constant: $\bar{h}\equiv h_e=\const$. At an equilibrium, the species densities, $p_e,q_e$ and the driver intensity, $s_e$  obey  equations 
\begin{eqnarray}
  {p}_e{f}({p}_e,{q}_e)=0,\quad {q}_e {g}({p}_e,{q}_e)=0,\quad {\kappa_2} h_e+{\kappa_1} q_e-\nu s_e=0.
\label{PeQe}
\end{eqnarray} 
The quasi-equilibria patterns (if any) are  the short waves, which  propagate with no slow modulation at least in the leading approximation -- that is, by formulae \eqref{S0},\eqref{Q0} and \eqref{P0}, 
\begin{eqnarray}&\label{QEqlbr}
   \bar{s}=  \bar{s}_e\equiv\const,\quad  \bar{q}=  \bar{q}_e\equiv \const,\quad   \bar{p}=  \bar{p}_e \equiv \const, 
&\end{eqnarray}
so that 
$s_0\equiv \bar{s}_e$, $q_0=\bar{q}_e$, $p_0\equiv \bar{p}_e\astphi$. Therefore, it is of sense to identify the quasi-equilibria with  the  equilibria of the leading slow system -- that is, with its special solutions, such that 
\begin{eqnarray} &
  \bar{p}_e\bar{f}_e(\bar{p}_e,\bar{q}_e)=0,\quad\bar{q}_e\bar{g}_e(\bar{p}_e,\bar{q}_e)=0,\quad \nu \bar{s}_e-{{{\kappa_2}}}\bar{h} -{{\kappa_1}} {\bar{q}_e}=0,
& 
\label{BrPeQe}\\
&
\bar{f}_e(\bar{p},\bar{q})=\left.\bar{f}(\bar{p},\bar{q},\nabla\bar{s})\right|_{\nabla\bar{s}=0},
\quad 
\bar{g}_e(\bar{p},\bar{q})=\left.\bar{g}(\bar{p},\bar{q},\nabla\bar{s})\right|_{\nabla\bar{s}=0}.
&
\label{BrFeGe}
\end{eqnarray} 
Feasibility for a quasi-equilibrium presumes that the external signal undergoes no slow modulation.  So, from here on,
\beear
&
\bar{h}=0,\quad \nabla h=0,
&
\label{NoSlowMdln}
\eear
so that $h=h(\xi, \tau)$ by default (we have set $\bar{h}$ to be zero just to make the subsequent considerations less bulky).
\begin{rmrk}\label{RmOnQEqVsEq}
  \emph{In general, the equations for  quasi-equilibria -- that is equations \eqref{BrPeQe},  differs from those for equilibria -- that is, equations \eqref{PeQe}, as functions $\bar{f}_e$ and $\bar{g}_e$   do not coincide with those set originally, $f$ and $g$. However, they coincide in the case of $p-$linearity ( \emph{cf.}  Remark~\ref{RmOnMmr} and Remark~\ref{RmOnP-Lnr}). Thus,  $p$-linearity entails that every quasi-equilibria is an equilibria of the original system and vice versa.}
\end{rmrk}
Any equilibria or quasi-equilibria addressed in what follows are those of coexistence of both species by default -- that is, 
\beear
p_e>0,\quad q_e>0,\ \quad \text{or}\ \quad \bar{p}_e>0,\quad \bar{q}_e>0.
\eear
\subsection{Linearization} 
\noindent
In general, given an ODE $\dot{y}=F(y)$ allowing for an equilibrium $y_0:\, F(y_0)=0$, its linearization reads $\dot{y}=Ay$, where linear operator $A$ is the differential of the mapping $F$ at the point $y_0$. This  is what  the linear stability analysis addresses.   If the ODE is set up in finite dimensions and  the spectrum of the operator $A$ does not contain points of the imaginary axis, the conclusion about the stability or instability of the linearized system is also true with respect to the Lyapunov stability or instability of the equilibrium of the original nonlinear system.  This also holds true  for certain classes of the infinite-dimensional systems, which includes  many of  those  emerging from applied problems,  \emph{cf.}  \cite{Ydvch}, \cite{Iss}. However, we do not address the justification of linearization in this paper.

For a fixed smooth vector function $\vw$ on $\mathbb{T}_{\xi,\tau}^{n+1}$, consider the operator family $\mathcal{L}(\vw+\vy,\eps)$, $\vy\equiv\const$, and operator-valued function $y\mapsto{P}^*(y)$ (we identify an element of $\mathbb{R}^n$, $y$, with a vector field  $\vy$ on the torus). Set 
\beq
\check{\Phi}\byd y\mapsto  {\Phi}(\vw+\vy,\eps),
 \eeq
 where the notation of $\Phi$ is for the mapping defined by equation \eqref{DfPhi}. Recall the differential operator, $\mathcal{L}=\mathcal{L}(\vw,\eps):\mathrm{L}_2(\mathbb{T}_{\xi,\tau}^{n+1})\to \mathrm{L}_2(\mathbb{T}_{\xi,\tau}^{n+1})$,  and the spectral projector on its kernel, ${P}$,    defined in Ssec.~\ref{SscDfOpr} (Lemma~\ref{LmmExst}). 
\begin{lemma}\label{LmmDffPhiL2}
\textsf{\emph{Mapping $\check{\Phi}$ is $\mathrm{L}_2$-differentiable at the origin and 
\beq
\left.d{\check{\Phi}}(v)\right.|_{y=0}=v_i\astphi_i,\ i=1..n,\quad \astphi_i=\mathcal{L}^{-1}(\vw,\eps)\pr_{\xi_i}\check{\Phi}(0)
\eeq
where we mean the right inverse set up by Remark~\ref{RmOnInvrs}, and  repeating the index indicates summation. }}
\end{lemma}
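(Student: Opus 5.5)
Write $\phi(y)=\check\Phi(y)$, $\astphi=\phi(0)$, and $\mathcal{L}=\mathcal{L}(\vw,\eps)$. The plan is to differentiate the defining relations $\mathcal{L}(\vw+\vy,\eps)\phi(y)=0$, $\langle\phi(y)\rangle=1$ with respect to the constant vector $y$. First I would spell out the structure of the perturbation. By Definition~\ref{DfL(w)},
\[
\mathcal{L}(\vw+\vy,\eps)u=\mathcal{L}u+{\wtlnbl}\cdot(u\vy)=\mathcal{L}u+y_i\pr_{\xi_i}u .
\]
So the family is an affine perturbation $\mathcal{L}+B(y)$ with $B(y)=y_i\pr_{\xi_i}$. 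Here $B(y)$ is relatively bounded with respect to $\mathcal{L}$, with relative bound $O(|y|)$, since $\mathcal{L}$ has domain $\mathrm{H}^{1,2}$ and controls first $\xi$-derivatives through the $\eps\Delta_\xi$ part.

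Second, I would get the candidate derivative by a formal computation. Substituting $\phi(y)=\astphi+y_i\astphi_i+o(|y|)$ and collecting linear terms gives $\mathcal{L}\astphi_i=-\pr_{\xi_i}\astphi$ and $\langle\astphi_i\rangle=0$. The right-hand side has zero mean, because $\pr_{\xi_i}$ annihilates averages on the torus. It is therefore orthogonal to $\ker\mathcal{L}^*=\{\const\}$, which was established in the proof of Lemma~\ref{LmmExst}. The equation is thus solvable, and the right inverse of Remark~\ref{RmOnInvrs}, normalized by zero mean, picks out the unique solution. This matches the stated formula up to the sign convention for $\pr_{\xi_i}$, which I will fix by checking against the sign in $B(y)$.

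Third, I would justify differentiability rigorously by Lyapunov–Schmidt reduction or, equivalently, the implicit function theorem. Split $u=\astphi+v$ with $\langle v\rangle=0$. The equation $\mathcal{L}v+B(y)(\astphi+v)=0$ is posed in $\widetilde{\mathrm{L}}_2$, where the image automatically has zero mean since $B(y)u$ and $\mathcal{L}u$ are divergences in the fast variables. By Lemma~\ref{LmmExst} and the Fredholm argument given there, $\mathcal{L}$ restricted to zero-mean functions in $\mathrm{H}^{1,2}$ is a bijection onto $\widetilde{\mathrm{L}}_2$. Its inverse $\mathcal{L}^{-1}$ is bounded, and by the closed graph theorem it is even bounded into $\mathrm{H}^{1,2}$. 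The equation then becomes the fixed-point problem
\[
v=-\mathcal{L}^{-1}B(y)(\astphi+v).
\]
I would show that $\mathcal{L}^{-1}B(y)$ has norm $O(|y|)$ on the graph space, so that for small $|y|$ a Neumann series gives
\[
v(y)=-\sum_k(-\mathcal{L}^{-1}B(y))^k\,\mathcal{L}^{-1}B(y)\astphi .
\]
The series is analytic in $y$, its linear term is $y_i\astphi_i$, and the remainder is $O(|y|^2)$ in $\mathrm{L}_2$. Uniqueness of the normalized kernel element from Lemma~\ref{LmmExst} then identifies $\astphi+v(y)$ with $\check\Phi(y)$.

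The main obstacle is this last bound, namely that $\pr_{\xi_i}\mathcal{L}^{-1}$ is bounded on $\widetilde{\mathrm{L}}_2$. I would prove it by the energy estimate obtained from pairing $\mathcal{L}u=f$ with $u$. The $\pr_\tau$ term integrates to zero on the torus, and the drift term is absorbed using smoothness of $\vw$. This yields
\[
\eps\|\wtlnbl u\|^2\le \|f\|\,\|u\|+C\|u\|\,\|\wtlnbl u\|,
\]
and combined with the $\mathrm{L}_2$ bound for $\mathcal{L}^{-1}$ this gives the required estimate.
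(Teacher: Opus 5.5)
Your proof is correct, but it takes a different route from the paper. The paper appeals to analytic perturbation theory for the spectral projector $P(y)$ onto $\ker\mathcal{L}(\vw+\vy,\eps)$. From analyticity of $y\mapsto P(y)$ it reads off the form $P_1(v)\psi=\langle\psi\rangle v_i\astphi_i$ of the differential, and hence $\mathrm{L}_2$-differentiability of $\check{\Phi}(y)=P(y)1$. Only then does it identify $\astphi_i$, by differentiating the weak identity $\langle\check{\Phi}(y)\,\mathcal{L}^*(\vw+\vy,\eps)\psi\rangle=0$.

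You instead work directly with the normalized kernel element. You split off the mean and invert $\mathcal{L}$ on zero-mean functions, which is exactly the Fredholm content of Lemma~\ref{LmmExst}. You then solve $v=-\mathcal{L}^{-1}B(y)(\astphi+v)$ by a Neumann series in the graph norm, and use the uniqueness part of Lemma~\ref{LmmExst} to identify $\astphi+v(y)$ with $\check{\Phi}(y)$.

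Your route buys self-containedness. Analyticity and the derivative come out together, as the linear term of the series, in the graph norm rather than only weakly. Your energy estimate is right as written: $\langle u\,\wtlnbl\cdot(u\vw)\rangle=-\langle u\,\vw\cdot\wtlnbl u\rangle$ is bounded by $\|\vw\|_\infty\|u\|\,\|\wtlnbl u\|$. It explicitly verifies the relative boundedness of $y_i\pr_{\xi_i}$ with respect to $\mathcal{L}$. That is precisely the hypothesis the paper's citation of perturbation theory leaves implicit: a holomorphic family of type (A) with $0$ an isolated simple eigenvalue.

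If you accept the paper's identification of the domain with $\mathrm{H}^{1,2}$, the bound on $\pr_{\xi_i}\mathcal{L}^{-1}$ already follows from the bounded inverse theorem. The energy estimate makes you independent of that regularity claim. The paper's route is shorter, since it delegates analytic dependence to a standard theorem; yours makes the estimates explicit and gives a concrete radius $|y|<1/C$.

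One correction: there is no ``sign convention for $\pr_{\xi_i}$'' to adjust. Since $\mathcal{L}(\vw+\vy,\eps)=\mathcal{L}(\vw,\eps)+y_i\pr_{\xi_i}$, what you prove is $d\check{\Phi}(v)|_{y=0}=-v_i\mathcal{L}^{-1}(\vw,\eps)\pr_{\xi_i}\check{\Phi}(0)$. The paper's own proof arrives at the same thing in \eqref{DfPhiAstI}. The plus sign in the displayed formula of the statement is a misprint, so state your result with the minus sign rather than trying to match it.
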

\textbf{Proof.} The well-known result in the perturbation theory for the linear
operators implies that the operator-valued function ${{P}}^*$ is analytic in $y$ in some neighborhood
of the origin, and on whole $\mathbb{R}^n$, therefore (as we are free to shift  $\vw$ on a constant vector). 
Then its differential, ${{P}}^*_1:v\to {{P}}^*_1v$, satisfies the limit identity
$
\rho^{-1}({{P}}^*_1(\rho y)-{{P}}^*_1(0))\to {{P}}^*_1y,\ \rho\to 0,
$
relative to the uniform operator topology. Hence,by definition of projector ${P}(y)$,  
\beear
&\rho^{-1}\langle \psi_1 (\check{\Phi}(\rho y)-\check{\Phi}(0))\rangle\langle \psi_2\rangle=\rho^{-1}\langle \psi_1 ({{P}}(\rho y)-{{P}}(0))\psi_2\rangle\to \langle \psi_1 ({P}_1y)\psi_2\rangle 
\ \forall\ \psi_1,\psi_2\in {\mathrm{L}_2}\Rightarrow &
\label{EqDffPhi}\\
 &\langle \psi_1 ({P}_1y)\psi_2\rangle=0\ \forall\ \psi_2:\ \langle \psi_2\rangle=0,
&\nonumber
\eear
and this, in turn, entails the following assertion
\beq 
\exists\,\{\astphi_i, \ i=1..n\} \subset  {\mathrm{L}_2}:\langle \astphi_i\rangle=0,\ ({P}_1v)\psi=\langle \psi\rangle\astphi_i v_i,
 \eeq
 where the summation agreement is in effect. The constrains  imposed here  on functions $\astphi_i$ are necessary to obey the condition ${P}{P}_1+{P}_1{P}={P}_1$, which, in turn, follows from   projector-valuedness of function ${P}$.
Moreover, by limit identity \eqref{EqDffPhi} (written for $\psi_2=1$), $\forall\,\delta>0$ $\exists\,\rho>0:$
 \beq
 \langle \psi_1 (\rho^{-1} (\check{\Phi}(\rho y)-\check{\Phi}(0))-\astphi_i y_i)\rangle\le \delta\|\psi_1\| 
\eeq
that means the $\mathrm{L}_2$-differentiability of mapping $y \mapsto \astphi={{P}}\check{\Phi}(y)$ at the origin. For calculating the corresponding differential, let's note that $\langle\astphi {\check{\mathcal{L}}}^*(y)\psi\rangle=0\ \forall \psi$, where ${\check{\mathcal{L}}}(y)=\mathcal{L}(\vw+\vy,\eps)$. Hence,
\beq
0=\left.d(y\mapsto \langle\astphi {\check{\mathcal{L}}}^*(y)\psi\rangle)\right|_{y=0}(v)=(\langle \astphi_i{\check{\mathcal{L}}}^*(0)\psi\rangle-\langle \astphi_{0}\psi_{\xi_i} \rangle)v_i,\ \text{where}\ \astphi=\check{\Phi}(0).
\eeq
These identities (valid for every test-function ${\psi}$ (in the fast variables) and every $v\in \mathbb{R}^n$) gives the weak formulations to the following equations
\beear
\check{\mathcal{L}}(0)\astphi_i=-\astphi_{\xi_i},\ \langle \astphi_i\rangle=0,\quad i=1..n.
\label{DfPhiAstI}
\eear
Additional constrains, $\langle \astphi_i\rangle=0$,   arise once again from differentiating the constraint $\langle{{P}}\check{\Phi}(y) \rangle=1\ \forall\, y$. However, they get satisfied by construction of the right inverse to $\mathcal{L}$, see Remark~\ref{RmOnInvrs}. Hence, we arrive at what the lemma claims.
$\square$\\
\begin{lemma}\label{LmmLnrzn} \textsf{\emph{Let the triad of constant values, $(\bar{p}_e,\bar{q}_e,\bar{s}_e)$, delivers an equilibrium to the leading slow system (that is, allows for identifying with a quasi-equilibrium). 
Then the linearization of the leading slow system nearby  this equilibrium reads
\begin{eqnarray}
& 
 {p}_t+{ \nabla\cdot}\left( {p}{\vc}^e + \bar{p}_e \mathcal{T}\nabla {s}\right)+\bar{p}_e\vb^e_1\nabla {s}=\bar{a}^e_1( {p}, {q}),
&
\label{EqLnrPrdLdngSlw}
\\
&
(\pr_t-\mu\Delta) {q}+\bar{p}_e\vb^e_2\nabla {s}=\bar{a}^e_2( {p}, {q}),
&
\label{EqLnrPryLdngSlw}
\\
&
  {s}_{t} +\nu  {s}= {\kappa_1}{ {q}},\ \text{where}
&
\label{EqLnrDrvrLdngSlw}
\\
&
\vc^e=\bar{\mathrm{V}}(0),\quad \mathcal{T}_{i,j}=\delta_{ij}-\left\langle\astphi_i\tilde{s}_{1\xi_i}\right\rangle,\, i,j=1..n,\quad \tilde{s}_1=\kappa_2\mathcal{H}^{-1}_{\mu_2}\tilde{h},\quad \bar{p}_e\vb^e_i=\bar{\vb}_i,
&
\label{EqDfT&Ce}
\end{eqnarray}
 the notations of $\astphi_i$, ${a}^e_i$ and    ${\vb}^e_i$  correspond to the functions, the linear functions and vector fields defined by expressions \eqref{DfPhiAstI}, \eqref{DfBr-a-LnFn} and \eqref{DfBrB-Fld},  which have  to be  evaluated for  $\bar{p}=\bar{p}_e$, $\bar{q}=\bar{q}_e$, $\bar{s}=\bar{s}_e$.  }}
\end{lemma}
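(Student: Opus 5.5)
We linearize the leading slow system, i.e. equations \eqref{EqPrdLdngSlw}, \eqref{EqPryLdngSlw} and \eqref{EqDrvrLdngSlw}, at the constant triad $(\bar{p}_e,\bar{q}_e,\bar{s}_e)$. We substitute $\bar{p}=\bar{p}_e+p$, $\bar{q}=\bar{q}_e+q$, $\bar{s}=\bar{s}_e+s$ and keep only the first-order terms in $(p,q,s)$ and their derivatives. By \eqref{NoSlowMdln} the external signal has no slow dependence, so $\tilde{s}_1=\kappa_2\mathcal{H}^{-1}_{\mu_2}\tilde{h}$ depends on the fast variables only. Hence all the slow dependence of $\astphi=\Phi(\nabla\bar{s}+\wtlnbl\tilde{s}_1,\mu_1)$ enters through the constant vector $\vy=\nabla\bar{s}$, and at the equilibrium $\vy=0$. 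This is exactly the setting of Lemma~\ref{LmmDffPhiL2} with $\vw=\wtlnbl\tilde{s}_1$ and $\eps=\mu_1$.

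The driver equation \eqref{EqDrvrLdngSlw} is already linear, with $\bar{h}=0$. Subtracting the equilibrium relation \eqref{BrPeQe} gives \eqref{EqLnrDrvrLdngSlw} at once.

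For the predator equation, we expand the flux $\bar{p}(\nabla\bar{s}+\bar{\mathrm{V}}(\nabla\bar{s}))$. At the equilibrium its zeroth-order part is $\bar{p}_e\bar{\mathrm{V}}(0)$, which is constant, so its divergence vanishes. The first-order part is
$$p\,\bar{\mathrm{V}}(0)+\bar{p}_e\bigl(\nabla s+d\bar{\mathrm{V}}(0)\nabla s\bigr).$$
Since $\bar{\mathrm{V}}(\vy)=\langle\check{\Phi}(\vy)\wtlnbl\tilde{s}_1\rangle$ and Lemma~\ref{LmmDffPhiL2} gives $d\check{\Phi}(v)=v_i\astphi_i$, we get
$$d\bar{\mathrm{V}}_j(0)v=\langle\astphi_i\tilde{s}_{1\xi_j}\rangle v_i.$$
So the flux is $p\vc^e+\bar{p}_e\mathcal{T}\nabla s$, with $\vc^e=\bar{\mathrm{V}}(0)$ and $\mathcal{T}$ built from $\delta_{ij}$ and the averages $\langle\astphi_i\tilde{s}_{1\xi_j}\rangle$. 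The sign and index placement in \eqref{EqDfT&Ce} must be checked against the orientation of $\astphi_i$ fixed in \eqref{DfPhiAstI}. One point needs care: $\mathrm{L}_2$-differentiability is enough here because $\bar{\mathrm{V}}$ is the pairing of $\check\Phi$ with the fixed $\mathrm{L}_2$ function $\wtlnbl\tilde{s}_1$.

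The reaction term $\bar{p}\bar{f}=\langle\astphi\bar{p}f(\astphi\bar{p},\bar{q})\rangle$ depends on $(\bar{p},\bar{q})$ and on $\nabla\bar{s}$ through $\astphi$. Differentiating in $(\bar{p},\bar{q})$ at fixed $\astphi$ gives $\langle a_1(\astphi p,q)\rangle=\bar{a}_1^e(p,q)$ by \eqref{Df-a-LnFn}–\eqref{DfBr-a-LnFn}. Differentiating in $\nabla\bar{s}$ gives, by the chain rule together with Lemma~\ref{LmmDffPhiL2},
$$\bar{p}_e\langle a_1(\astphi_i,0)\rangle\,\pr_{x_i}s .$$
We then identify this with $-\bar{\vb}_1\nabla s$ using \eqref{DfBrB-Fld}. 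The key step is that, by \eqref{DfPhiAstI}, $\astphi_i=\mathcal{L}^{-1}\pr_{\xi_i}\astphi$ up to sign, so $\mathcal{L}^{-1}\wtlnbl\cdot(\astphi\nabla s)=\astphi_i\pr_{x_i}s$, which is exactly the argument inside $a_i$ in \eqref{DfBrB-Fld}. Moving this term to the left-hand side gives $\bar{p}_e\vb_1^e\nabla s$. The prey equation \eqref{EqPryLdngSlw} is handled identically: the $a_2$ part yields $\bar{a}^e_2$ and the gradient sensitivity yields $\bar{p}_e\vb^e_2\nabla s$. The constant terms cancel by \eqref{BrPeQe}.

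The main obstacle is justifying the chain rule through the implicit map $\Phi$. Lemma~\ref{LmmDffPhiL2} gives only $\mathrm{L}_2$-differentiability of $\check\Phi$ at $\vy=0$, whereas $f$ and $g$ are nonlinear in $\astphi\bar{p}$. We need the composition $\vy\mapsto\langle\astphi f(\astphi\bar{p},\bar{q})\rangle$ to be differentiable. We will handle this by invoking the analyticity of $P^*$ in $\vy$ (used in the proof of Lemma~\ref{LmmDffPhiL2}) together with elliptic–parabolic regularity on the torus, which upgrades the convergence to $\mathrm{L}_\infty$. Positivity of $\astphi$ (by the maximum principle) keeps $\astphi\bar{p}$ inside the cone where $f$ and $g$ are analytic, so the Nemytskii composition is differentiable. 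After that, the remaining work is bookkeeping of signs and indices.
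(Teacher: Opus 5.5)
Your proposal is correct and follows essentially the paper's own argument: the paper also differentiates $\bar{\mathrm{V}}(\vy)=\langle\check\Phi(\vy)\wtlnbl\tilde s_1\rangle$ via Lemma~\ref{LmmDffPhiL2} to get $\mathcal{T}$, and differentiates the averaged kinetics in $(\bar p,\bar q)$ and, through $\astphi$, in $\nabla\bar s$ to get $\bar a_i(p,q)-\bar{\vb}_i\nabla s$, while your point that the Nemytskii composition needs more than $\mathrm{L}_2$-differentiability of $\check\Phi$ is a justification the paper simply omits. The sign you leave open is fixed by differentiating $\mathcal{L}(\vw+\vy)\check\Phi(\vy)=0$, which gives $d\check\Phi(v)=-\mathcal{L}^{-1}\wtlnbl\cdot(\astphi v)$ as in \eqref{DfPhiAstI}; hence your intermediate identity should read $\mathcal{L}^{-1}\wtlnbl\cdot(\astphi\nabla s)=-\astphi_i\pr_{x_i}s$, which is what makes your final $-\bar{\vb}_i\nabla s$ correct, and you get $\mathcal{T}_{ji}=\delta_{ji}-\langle(\mathcal{L}^{-1}\pr_{\xi_i}\astphi)\,\tilde s_{1\xi_j}\rangle$, which matches the stated formula (after fixing its evident index typo) only if $\astphi_i$ is read with the orientation in the statement of Lemma~\ref{LmmDffPhiL2}, not that of \eqref{DfPhiAstI}---the paper is itself inconsistent on this sign.
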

{\bf Proof.} We start with calculating the contribution from linearizing the term $\bar{\mathrm{V}}(\nabla\bar{s})$.
 The  notations set up upon proving Lemma~\ref{LmmDffPhiL2} is in use again. Let $\bar{\vg}:(x,t)\mapsto\bar{\vg}(x,t)\in \mathbb{R}^n$. Given formula \eqref{DfF&V}, we extend the action of mapping $V$ to an arbitrary vector field in slow variables  as follows
\beq
\bar{\mathrm{V}}(\bar{\vg})\nabla\bar{\psi}= \langle (\check{\Phi}\circ \bar{\vg})\wtlnbl\tilde{s}_1\nabla\bar{\psi}\rangle, \quad
\eeq
for every test-function $\bar{\psi}$ (in the slow variables), provided that  we have put  ${\vw}=\wtlnbl \tilde{s}_1$ in the definition of mapping $\check{\Phi}$. Hence, 
\beq
\nabla\bar{\psi}\bar{\mathrm{V}}^\prime\bar{\vv}= \langle d\check{\Phi}(\bar{\vv})\wtlnbl\tilde{s}_1\nabla\bar{\psi}\rangle.
\eeq
  where the notation of $\bar{\mathrm{V}}^\prime$ is for the differential of mapping $\bar{\mathrm{V}}$. Thus, differentiating mapping $\bar{\mathrm{V}}$ leads to a tensor field, $\mathcal{T}$, which components are 
\beq
\mathcal{T}_{ij}=\langle{\astphi}_j\tilde{s}_{1\xi_i}\rangle,   
 \eeq  
where the notations of $\astphi_j$ are for the functions defined in the statement of Lemma~\ref{LmmDffPhiL2}. Thus,  linearizing the term $\bar{\mathrm{V}}(\nabla\bar{s})$ contributes exactly what the present Lemma claims.
 
In the same way, we proceed with differentiating  mappings between functions in the slow variables defined as follows
\beq 
F_1:(\bar{p},\bar{q},\bar{s})\mapsto \bar{p}\bar{f}(\bar{p},\bar{q},\nabla\bar{ s}),\quad   F_2: (\bar{p},\bar{q},\bar{s})\mapsto \bar{q}\bar{g}(\bar{p},\bar{q},\nabla\bar{ s}),
\eeq   and  arrive at the following expressions 
\beq
 dF_i(\bar{p}_1,\bar{q}_1,\bar{s}_1)=\bar{a}_i(\bar{p}_1,\bar{q}_1)-\bar{\vb}_i\nabla\bar{s}_1, \ i=1,2,
\eeq
where the notations of $\bar{a}_i$ and    $\bar{\vb}_i$ are for the linear functions and vector fields defined by expressions \eqref{DfBr-a-LnFn} and \eqref{DfBrB-Fld},  to be  evaluated using that triple $(\bar{p},\bar{q},\bar{s})$ at which the differentiation is being performed. This entails what we have to prove, provided that  $(\bar{p},\bar{q},\bar{s})=(\bar{p}_e,\bar{q}_e,\bar{s}_e)$, and $\vw_0=\wtlnbl\tilde{s}_1 $.  
$\square$\\ 
\begin{rmrk}\label{RmOnCnstntCfcnts}
\emph{Since the external signal undergoes no slow modulation by assumption -- that is, equalities \eqref{NoSlowMdln} take place,  the linearized system \eqref{EqLnrPrdLdngSlw}-\eqref{EqLnrDrvrLdngSlw}  possesses the translational invariance. Indeed, all the coefficients  are constant therein -- that is, $\mathcal{T}=\const$, $\vc^e=\const$, $\vb_i^e=\const$, $i=1,2$.}
\end{rmrk}
\begin{rmrk}\label{RmOnP-Lnr-1}
\emph{For $p$-linear kinetics, $\vb_i^e=0$ and $\bar{a}^e_i=a^e_i$, $i=1,2$, where the notation of $a^e_i$ is for the linear functions defined in \eqref{Df-a-LnFn} for $p_0=\bar{p}_e$, $q_0=\bar{q}_e$, \emph{ \emph{cf.} } Remark\,\ref{RmOnP-Lnr}.  }
\end{rmrk}
\subsubsection{Stationary signals}
\label{SsscStSgnl&Lnrzn}
\noindent
The notation of $\tilde{s}$ is in place of $\tilde{s}_1$ throughout the rest of the article.  Also, the notations of $\efr$ and $\efr_\ast$ are in use for the functions specified in \eqref{DfExpFn}. 

Throughout this subsection, 
\beq
h=\tilde{h}(\xi) \Rightarrow\tilde{s}=\tilde{s}(\xi) 
\eeq
as $\tilde{s}={{{\kappa_2}}}\mathcal{H}_{\mu_2}^{-1}\tilde{h}$  ( \emph{cf.}  formulae \eqref{Ntn3} and \eqref{Q1&S1}). Also,  the use of  notation $\mathcal{L}=\mathcal{L}(\widetilde{\nabla}\tilde{s},\mu_1)$  again means the action of this operator restricted to the  functions in fast variable $\xi$ only. Since $\nabla\bar{s}=0$ for the homogeneous equilibrium, $\vc^e=\mathrm{V}(0)=0$.

Calculating the transport tensor of the linearized system \eqref{EqLnrPrdLdngSlw}-\eqref{EqLnrPryLdngSlw}, $\mathcal{T}$, in general,  includes solving the following problems
$
\mathcal{L}\astphi_i=\astphi_{\xi_i},\ \langle\astphi_i\rangle=0,
$ 
$
\astphi={\Phi}(\wtlnbl\tilde{s},\mu_1),
$ 
$
i=1..n.
$
Currently, $\astphi=\efr_\ast$, but it's not enough to calculate the transport tensor explicitly, so that we get at more tractable class of signals that we have specified by equation \eqref{EqH=SumHiStd}. Then the ansatz  
\beear
\astphi_i=\psi_i\efr_\ast,\quad\psi_i=\psi_{i}(\xi_i),
\eear
(where the notations of  $\psi_j$ are for the functions that we have specified in line \eqref{EqPsi-I}) entails equation as follows
\beq
\mu_1\psi_{i\xi_i}=\frac{1}{\efr_i\langle\efr^{-1}_i\rangle}-1,\quad \efr_i=\exp(\tilde{s}_i/\mu_1).
\eeq
This one has  the unique solution     for every $i=1..n$, and    function  $\astphi_i=\psi_i\efr_*$ satisfies condition $\langle \astphi_i\rangle=0$. Further, 
\beear
&
\mathcal{T}_{j,k}=\delta_{j,k}-\langle\astphi_j\tilde{s}_{\xi_k}\rangle=\delta_{j,k}+\mu_1\langle\psi_{j\xi_k} \efr_*\rangle=\delta_{j,k}
+\delta_{j,k}\left\langle\left(\frac{1}{\efr_j\langle\efr^{-1}_j\rangle}-1\right) \efr_*\right\rangle\Rightarrow
&
\\
&
\mathcal{T}_{j,k}=\frac{\delta_{j,k}}{\langle\efr_j\rangle\langle\efr^{-1}_j\rangle}.
&
\label{EqTrTnsrSttnr}
\eear
\begin{rmrk}\label{RmOnExtnsn1}
 {\emph{Expressions \eqref{EqTrTnsrSttnr} includes as a particular case the   result of \cite{AM1}, which relates the stationary signal too but in one spatial dimension. }}
\end{rmrk}
\begin{rmrk}
\emph{Another derivation of the expressions \eqref{EqTrTnsrSttnr} is to differentiate the expression for the drift velocity specified by formula \eqref{EqDrftVlct=}. Expectedly, they match Remark~\ref{RmOnExpSprss} in the sense of revealing the suppression of the taxical transport with an exponential sharpness upon the signal magnitude growth.} 
\end{rmrk}
An effect of  general (that is, free of constraint \eqref{EqH=SumHiStd}) stationary signal seems to be similar to the example addressed. To this end, define the Hilbert space $\mathrm{L}_{2,\efr}$ as the vector space of  $\mathrm{L}_{2}-$summable vector fields of the fast torus, $\mathbb{T}^n_{\xi}$, endowed with metric 
\beq
\|\vw\|^2_{\efr}=\langle\efr\vw^2\rangle. 
\eeq 
\begin{lemma}\label{LmmTrnspTnsr}
  \textsf{\emph{Let $h=h(\xi)$. Then the transport tensor, $\mathcal{T}$, is symmetric,  and its eigenvalues belong to semi-interval $(0,1]$.}}
\end{lemma}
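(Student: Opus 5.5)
The plan is to recast $\mathcal{T}$ as a quadratic form given by a constrained minimisation in the weighted space $\mathrm{L}_{2,\efr}$. This is the classical periodic homogenization (cell problem) structure.

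\emph{Step 1: the cell problem in divergence form.} For $h=h(\xi)$ we have $\tilde{s}=\tilde{s}(\xi)$, and $\mathcal{L}=\mathcal{L}(\wtlnbl\tilde{s},\mu_1)$ acts on functions of $\xi$ only. It reads $\mathcal{L}u=\wtlnbl\cdot(u\wtlnbl\tilde{s}-\mu_1\wtlnbl u)=-\mu_1\wtlnbl\cdot\bigl(\efr\wtlnbl(u/\efr)\bigr)$, and $\astphi=\efr_\ast$. I would set $\astphi_i=\efr_\ast\chi_i$ in the problems defining $\astphi_i$ (Lemma~\ref{LmmDffPhiL2}, with the sign convention used in subsection~\ref{SsscStSgnl&Lnrzn}). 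Then $\chi_i$ solves
\[
\mu_1\wtlnbl\cdot\bigl(\efr\wtlnbl\chi_i\bigr)=\mp\pr_{\xi_i}\efr ,
\]
equivalently $\wtlnbl\cdot\bigl(\efr(\mu_1\wtlnbl\chi_i\pm\mathrm{e}_i)\bigr)=0$, with the normalisation $\langle\efr\chi_i\rangle=0$. Solvability follows from $\langle\pr_{\xi_i}\efr\rangle=0$ together with the Fredholm argument of Lemma~\ref{LmmExst}, or directly from Lax--Milgram on the mean-zero subspace.

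\emph{Step 2: rewrite $\mathcal{T}$.} Write $\tilde{s}_{\xi_k}=\mu_1\efr_{\xi_k}/\efr$. Then
\[
\langle\astphi_j\tilde{s}_{\xi_k}\rangle=\mu_1\langle\efr\rangle^{-1}\langle\chi_j\efr_{\xi_k}\rangle=-\mu_1\langle\efr\rangle^{-1}\langle\efr\chi_{j\xi_k}\rangle .
\]
Integrating by parts against the cell equation of $\chi_k$ and fixing signs so that the diagonal case of subsection~\ref{SsscStSgnl&Lnrzn} is reproduced, I expect to obtain
\[
\mathcal{T}_{jk}=\langle\efr\rangle^{-1}\bigl\langle\efr\,(\mathrm{e}_j+\mu_1\wtlnbl\chi_j)\cdot(\mathrm{e}_k+\mu_1\wtlnbl\chi_k)\bigr\rangle .
\]
The key identity is $\langle\efr\,\wtlnbl\chi_j\cdot(\mathrm{e}_k+\mu_1\wtlnbl\chi_k)\rangle=0$, which is the weak form of the equation for $\chi_k$ tested with $\chi_j$. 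This expression is manifestly symmetric, which proves the first claim. Before relying on it, I will check it against formula \eqref{EqTrTnsrSttnr} in the separable case.

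\emph{Step 3: the eigenvalue bounds.} For $\zeta\in\mathbb{R}^n$ the formula gives
\[
\zeta\cdot\mathcal{T}\zeta=\langle\efr\rangle^{-1}\|\zeta+\mu_1\wtlnbl\chi_\zeta\|^2_{\efr},\qquad \chi_\zeta=\zeta_i\chi_i .
\]
The cell equation says that $\chi_\zeta$ minimises $\|\zeta+\wtlnbl v\|_{\efr}$ over periodic $v$, since $\zeta+\mu_1\wtlnbl\chi_\zeta$ is $\mathrm{L}_{2,\efr}$-orthogonal to all gradients. Taking $v=0$ gives the upper bound $\zeta\cdot\mathcal{T}\zeta\le|\zeta|^2$, so all eigenvalues are at most $1$. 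For positivity, the Cauchy--Schwarz inequality gives
\[
|\zeta|^2=\langle\zeta+\wtlnbl v\rangle^2\le\langle\efr^{-1}\rangle\,\|\zeta+\wtlnbl v\|^2_{\efr},
\]
hence $\zeta\cdot\mathcal{T}\zeta\ge|\zeta|^2/(\langle\efr\rangle\langle\efr^{-1}\rangle)>0$. This matches \eqref{EqTrTnsrSttnr}.

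The main obstacle is Step 2: getting the signs and normalisations right so that the paper's definition $\mathcal{T}_{jk}=\delta_{jk}-\langle\astphi_j\tilde{s}_{\xi_k}\rangle$ indeed equals the energy form. This requires being careful with the inconsistent sign in the definition of $\astphi_i$ between \eqref{DfPhiAstI} and subsection~\ref{SsscStSgnl&Lnrzn}, and with the fact that $\chi_i$ is determined only up to an additive constant, which drops out because $\langle\efr_{\xi_k}\rangle=0$.
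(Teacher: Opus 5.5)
Your proposal is correct. The sign you flagged as the main risk resolves in favour of your upper choice. The convention $\mathcal{L}\astphi_i=+\astphi_{\xi_i}$ of the stationary-signal subsection is also the one the paper's proof uses (it works with $\mathcal{L}^{-1}(\bar{\vu}\wtlnbl\efr)$). It is also the right one: $\mathcal{T}=I+\bar{\mathrm{V}}'(0)$, while differentiating $\mathcal{L}(\vw+\vy)\check{\Phi}(y)=0$ gives $\mathcal{L}\,\pr_{y_j}\check{\Phi}=-\pr_{\xi_j}\astphi$. With this choice, $\wtlnbl\cdot\bigl(\efr(\mu_1\wtlnbl\chi_i+\mathrm{e}_i)\bigr)=0$. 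Testing the $\chi_k$-equation with $\chi_j$ then yields $\mathcal{T}_{jk}=\delta_{jk}-\mu_1^2\langle\efr\,\wtlnbl\chi_j\cdot\wtlnbl\chi_k\rangle/\langle\efr\rangle$, which is exactly your energy form. The opposite sign would give $\mathcal{T}\ge I$, so this check is essential rather than cosmetic.

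Compared with the paper, the skeleton is the same but packaged differently. The paper's operator $Q\vu=-\mu_1\wtlnbl\efr^{-1}\mathcal{L}^{-1}\wtlnbl\cdot(\efr\vu)$ is the $\mathrm{L}_{2,\efr}$-orthogonal projector onto periodic gradients, and for constant $\zeta$ one has $(I-Q)\zeta=\zeta+\mu_1\wtlnbl\chi_\zeta$. So the paper's identity $\zeta\cdot\mathcal{T}\zeta=\langle\efr\rangle^{-1}\|(I-Q)\zeta\|_{\efr}^2$ is your energy formula. Its symmetry and upper bound, obtained from $Q=Q^*=Q^2$, correspond to your Dirichlet principle with $v=0$.

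The real difference is strict positivity. The paper argues qualitatively that $Q\zeta=\zeta$ would make a nonzero constant vector a gradient on the torus. Your Cauchy--Schwarz step instead gives the explicit harmonic-mean bound $\mathcal{T}\ge(\langle\efr\rangle\langle\efr^{-1}\rangle)^{-1}I$, which is more informative. It is attained in direction $\mathrm{e}_1$ when $\tilde{s}$ depends on $\xi_1$ only, where it coincides with \eqref{EqTrTnsrSttnr}. For a separable signal with several nonconstant summands, \eqref{EqTrTnsrSttnr} lies strictly above it, so your ``matches'' should read ``is consistent with''.

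The projector formulation, in turn, makes the equality case of the upper bound (the remark on eigenvalue one) a one-liner. Your form gives that case too: equality forces $\wtlnbl\chi_\zeta=0$, i.e. $\zeta\cdot\wtlnbl\efr=0$.
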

{\bf Proof.}  Let $\bar{\vu}=(u_1,..,u_n)$ and  $\bar{\vv}=(v_1,..,v_n)$ be a couple of vector fields on the fast variables torus, which correspond to the constant vector fields on $\mathbb{R}^n$. Since $\wtlnbl \tilde{s}=\mu_1 \efr^{-1}\wtlnbl\efr$, 
\beq
\mathcal{L}=-\mu_1\wtlnbl\cdot(\efr\wtlnbl(\efr^{-1}.
\eeq
Furthermore,
\beq
u_jv_k\langle \efr\rangle\langle\astphi_j\tilde{s}_{\xi_k}\rangle=\mu_1\langle\efr^{-1}(\bar{\vv}\wtlnbl\efr)\mathcal{L}^{-1}(\bar{\vu}\wtlnbl\efr)\rangle=
\mu_1\langle\efr^{-1}(\bar{\vv}\wtlnbl\efr)\mathcal{L}^{-1}(\bar{\vu}\wtlnbl\efr)\rangle=
-\mu_1\langle\bar{\vv}\efr\wtlnbl\efr^{-1}\mathcal{L}^{-1}\wtlnbl\cdot(\efr\bar{\vu})\rangle
\eeq
In this chain of equalities, the right hand side of the last one allows for extending to the bilinear form on space $\mathrm{L}_{2,\efr}$.  
\beq
-\langle{\vv}\efr\wtlnbl\efr^{-1}\mathcal{L}^{-1}\wtlnbl\cdot(\efr{\vu})\rangle=
\langle\wtlnbl\cdot(\efr{\vv})\efr^{-1}\mathcal{L}^{-1}\wtlnbl\cdot(\efr{\vu})\rangle,
\eeq
and the symmetry of the form on the  right hand side follows from the symmetry of the operator $\efr^{-1}\mathcal{L}^{-1}$ relative to the standard $\mathrm{L}_{2}-$metric as it is the right inverse to the symmetric operator $\mathcal{L}\efr=-\mu_1\wtlnbl\cdot(\efr\wtlnbl$. Further, the bilinear form under consideration induces an  operator  $Q:\mathrm{L}_{2,\efr}\to \mathrm{L}_{2,\efr} $ that reads
\beq
Q:\vu\mapsto -\mu_1\wtlnbl \efr^{-1} \mathcal{L}^{-1}\wtlnbl\cdot(\efr{\vu}).
\eeq
Moreover, operator $Q$ is a projecting operator. Indeed,
\beq
Q^2=\mu_1^2\wtlnbl \efr^{-1} \mathcal{L}^{-1}\wtlnbl\cdot(\efr \wtlnbl (\efr^{-1} \mathcal{L}^{-1}\wtlnbl\cdot(\efr=
-\mu_1\wtlnbl \efr^{-1} \mathcal{L}^{-1}\wtlnbl\cdot(\efr=Q
\eeq
Thus, operator $Q$ is bounded orthogonal projector in $\mathrm{L}_{2,\efr}$. Hence, 
\beq
u_ju_k\langle \efr\rangle\langle\astphi_j\tilde{s}_{1\xi_k}\rangle=(\bar{\vu},Q\bar{\vu})_{\efr}\ge 0
\quad \text{and}\quad u_ju_k\langle \efr\rangle\langle\astphi_j\tilde{s}_{1\xi_k}\rangle\le \|\bar{\vu}\|_{\efr}^2=\langle\efr\rangle |\bar{\vu}|^2
\eeq
Assume $Q\bar{\vu}=\bar{\vu}$. Then $\bar{\vu}=\wtlnbl\phi$ that's not true for a function on  the torus. This completes the proof as $\bar{\vu}\mathcal{T}\bar{\vu}=\bar{\vu}^2-(\bar{\vu},Q\bar{\vu})_\efr\langle \efr\rangle^{-1}$.
$\square$
\begin{rmrk} \label{RmOnMinEgnTrTn}
\emph{ The maximal eigenvalue  of the transport tensor can attain the unity provided that the external signal, $\tilde{h}$, is invariant with respect to the translations in the fast variables along  the directions filling some subspace, which is  the corresponding eigen-subspace in that event. Indeed, possessing such an invariance means that  there exists vector $\bar{\vw}$ such that
$
\bar{\vw}\wtlnbl\efr=0=\wtlnbl\cdot(\efr\bar{\vw}),
$
and  $Q\bar{\vw}=0$, hence. The converse statement holds true too. Hence,  a generic stationary signal  produces  the  transport tensor such that every its eigenvalue is strictly less than unity.}
\end{rmrk}
\subsubsection{Travelling waves}
\label{SsscTrWv}
\noindent
Throughout this subsection, the signal is assumed to be a function in self-similar variables, $\eta=(\eta_1,..,\eta_n)$ -- that is,  
\beq
h=\tilde{h}(\eta),\ \eta=(\eta_1,\ldots,\eta_n),\quad \eta_i=\xi_i-\omega_i\tau, \quad |\omega_i|\ell_0 =\ell_i, \ i=1..n,
\eeq 
or, maybe, $\omega_i=0$. Function $\tilde{h}$ assumed to be  $\ell_i$-periodic   in each of the self-similar variables, while  the restrictions on the waves speeds, $\omega_i$, entail the time-periodicity with the specified period, $\ell_0$.  To these self-similar variables, we relate all the functions and differential operations throughout this subsection. Then the equation for finding $\astphi={\Phi}(\bar{\vu}+\wtlnbl\tilde{s},\mu_1)$  reads 
\beq
\wtlnbl\cdot(\astphi(\nabla \tilde{s}-\bar{\vomega})-\mu_1\wtlnbl\astphi)=0,\quad \bar{\vomega}=(\omega_1,\ldots,\omega_n).
\eeq
This leads to an  explicit expression for the residual drift, $\vc^e=\mathrm{\bar{V}}(0)$, provided that the external signal is the sum of simple travelling waves,
\beear
h=\tilde{h}(\eta)=\tilde{h}_1(\eta_1)+\ldots+\tilde{h}_n(\eta_n).
\label{H=TrWv}
\eear
It follows from 
 the comparison  to equation \eqref{EqPhiStd} and related considerations in sub-subsection~\ref{SsscDrft}. 
Thus,
\beear\label{EqRsdDrftTrWv}
c^e_j=\omega_j-\frac{1}{\langle \efr_j \psi_j\rangle},\quad j=1..n. 
\eear
where the meaning notations of $\psi_j$ and $\efr_j$ is the same as in sub-subsection~\ref{SsscDrft} modulo changing $\bar{u}_i$ by $-\omega_i$, $\psi_j$ by $-\psi_j$, and $\xi_j$ by $\eta_j$. It turns out that  the residual drift, $\vc^e$, is not zero in contrast to the stationary case.  Also, it is worth noticing that $c_j=O(1/\omega_j)$, $\omega_j\to\infty$. It easily follows by expanding  the representation
\beear\label{EqRprsntIngrl}
\mu_1\langle\efr_j \psi_j\rangle=\int\limits_{\sigma>0}\mathrm{e}^{-\frac{\omega_j\sigma}{\mu_1}}
\left\langle\frac{\efr(\sigma)}{\efr(\sigma+\eta)}\right\rangle d\sigma.
\eear
in the negative powers of $\omega_j/\mu_1$.

Calculating the transport tensor involves solving equations $\mathcal{L}\astphi_j=\phi_{\ast\eta_j}$, $\langle\phi_j\rangle=0$, $j=1..n.$. The solutions read  
\beq
\astphi_j=\frac{{\efr}\check{\psi}_j(C_j\psi_j+\theta_j)}{\langle\efr \psi\rangle} ,\ \text{where}\ \theta_j=\theta_j(\eta_j)=(\omega_j+\mu_1\pr_j)^{-1}\psi_j(\eta_j),\ \pr_j=\pr_{\eta_j}, \check{\psi}_j=\frac{\psi_1\ldots\psi_n}{\psi_j},\ j=1..n.
\eeq
 and factor $C_j=\const$  is to get the solution that vanishes on average. Moreover,  this solution  satisfies the following condition 
\beq
\langle{\efr}_j(C_j\psi_j+\theta_j)\rangle =0. 
\eeq
This follows  from expressing the average by  the multiple integration. 
The corresponding components of the transport tensor, $\mathcal{T}$, read
\beear
&\mathcal{T}_{jk}=\delta_{jk}-\langle\astphi_j\tilde{s}_k\rangle=\delta_{jk}+
\frac{\mu_1\langle\efr\pr_k(\check{\psi}_j(C_j\psi_j+\theta_j))\rangle}{ \langle\efr\psi\rangle}=
\delta_{jk}-(1-\delta_{jk})\frac{\mu_1\langle\check{\efr}_j\pr_k\check{\psi}_j\rangle\langle{\efr}_j (C_j\psi_j+\theta_j)\rangle}{ \langle\efr\psi\rangle}-
&
\\
&
-\delta_{jk}\frac{\mu_1\langle\efr\check{\psi}_j\pr_j( C_j\psi_j+\theta_j)\rangle}{ \langle\efr\psi\rangle}
&
\eear 
where $\check{\efr}_j=\efr/\efr_j$. Hence,  tensor $\mathcal{T}$ is diagonal, and its diagonal elements read
\beear
&
1-\frac{\mu_1\langle\efr\check{\psi}_j\pr_j( C_j\psi_j+\theta_j)\rangle}{ \langle\efr\psi\rangle}=1-\frac{\mu_1\langle\efr_j\pr_j( C_j\psi_j+\theta_j)\rangle}{ \langle\efr_j\psi_j\rangle}=1+C_jc_j^e-\frac{\langle\efr_j\mu_1\pr_j(\omega_j+\mu_1\pr_j)^{-1}\psi_j\rangle}{ \langle\efr_j\psi_j\rangle}
&
\\
&
=1+C_jc_j^e-\frac{\langle\efr_j\psi_j-\omega_j\theta_j\rangle}{ \langle\efr_j\psi_j\rangle}=C_jc_j^e+\omega_j\frac{\langle\efr_j\theta_j\rangle}{ \langle\efr_j\psi_j\rangle}=\frac{\langle\efr_j\theta_j\rangle}{ \langle\efr_j\psi_j\rangle^2}\Rightarrow
&
\\
&
\mathcal{T}_{ij}=\delta_{ij}\frac{\langle\efr_j\theta_j\rangle}{ \langle\efr_j\psi_j\rangle^2},\quad i,j=1..n.
&
\label{EqTrTnsTrWv}
\eear
\begin{rmrk}\label{RmOnInrvtng}
  \emph{In the limit of  $\omega_i\to+0$, these  expressions \eqref{EqTrTnsTrWv} match  formulaes \eqref{EqTrTnsrSttnr}. Hence, suppressing the taxical transport persists for the small  wave speed, $\omega_i$,  \emph{cf.}  Remark~\ref{RmOnExpSprss}, but an increase in it is capable of exerting an opposite effect. Indeed,
\beq
\frac{\langle\efr_j\theta_j\rangle}{ \langle\efr_j\psi_j\rangle^2}=1+\frac{\langle (\pr_i s_i)^2\rangle}{\omega_j^2}+O(\frac{1}{\omega_j^3}),\quad {\omega_j}\to \infty.
\eeq
This expansion follows upon applying representation \eqref{EqRprsntIngrl} to the denominator and a similar one to the numerator in formula \eqref{EqTrTnsTrWv}.}
\end{rmrk}

\subsection{Linear stability analysis}\label{SscLnStbAnls}
\subsubsection{Terminological remarks}
\noindent
In the finite-dimensional case, the study of the stability of a linearized system allows for reducing to the study of the eigenmodes -- that is, the particular solutions of form $\exp(\lambda t)y$, that, in turn, are to obey the spectral problem $Ay=\lambda y$ called the {\it spectral stability problem}. An eigenmode is  said to be {\it stable~(unstable, neutral)} if the real part of the corresponding eigenvalue $\lambda$ is negative (positive, zero). Also, it is customary to say that \emph{an equilibrium is stable (unstable, neutral) if each of its  eigenmodes is stable (there is an unstable (neutral) mode)}.  For infinite dimensions, this  stability  does not entail  Lyapunov' stability  always, but for many important examples. However, these issues are beyond the scope of the present study.

When the linearized system possesses the translation invariance, \emph{ \emph{cf.} } Remark~\ref{RmOnCnstntCfcnts},  the stability analysis employs so called 
\emph{normal modes} of small perturbations that have the following form
\begin{equation}\label{EgnMdsGnrl}
\hat{a}\exp(\lambda t+i \hx x),\quad \hat{a}\in \mathbb{C}^\ell,\quad \lambda \in \mathbb{C},\quad \hat{x}\in \mathbb{R}^n,
\end{equation}
where $\hat{a}$ is the unknown amplitude, $\ell$ is the number of unknowns in the linearized system, and $\lambda$ is an eigenvalue of the corresponding spectral stability problem, which, in  turn, reduces  to an algebraic eigenvalue problem for each specific wave vector $\hx$ due to the separation of variables.  

Let the system, together with its equilibrium, depend smoothly on $N$ parameters. Then the spectral stability problem depends  on the same parameters, and, hence, every normal mode depends on them and also on the wave vector, $\hx$. The subset of the $n+N$-dimensional parametric space on which neutral modes exist is called \emph{ neutral}. The boundary of the projection of this set along the $\hx$-subspace onto the subspace of the remaining $N$ parameters gives the \emph{ critical set} separating the stability region from the instability region in the $N$-dimensional space of parameters of the equilibria family.

The neutral parametric set often allows for considering it as a union of  smooth strata of different dimensions.  If a parametric point belongs to   a stratum of the maximal dimension, $N+n-1$, then either a simple pair of  complex-conjugate modes with $\re\lambda=0,\,\im\lambda \neq0$ or  a simple real mode with $\lambda =0$ exists.  The points of the former  and latter kinds form two disjoint $N+n-1$-dimensional strata.    \emph{We  call a strata of the first (second) type  oscillatory (monotonic). }

Fig.~\ref{FgNtrlCrtcl} illustrates a  possible form of the neutral set of a one-parameter family ($n=N=1$). (The figure has no relation to any specific stability problem.) There are 4 neutral strata of maximum dimension 1, the point of their junction is a stratum of zero dimension. The red strata are, say, monotonic, and the blue ones are oscillatory. The monotonic and oscillatory strata are naturally combined into the so-called neutral curves. The critical set consists of one value of the parameter, denoted by $A_{cr}$. The instability region in the one-dimensional space of parameters is given by the inequality $A>A_{cr}$.
\begin{figure}
\centering
\includegraphics[scale=0.3]{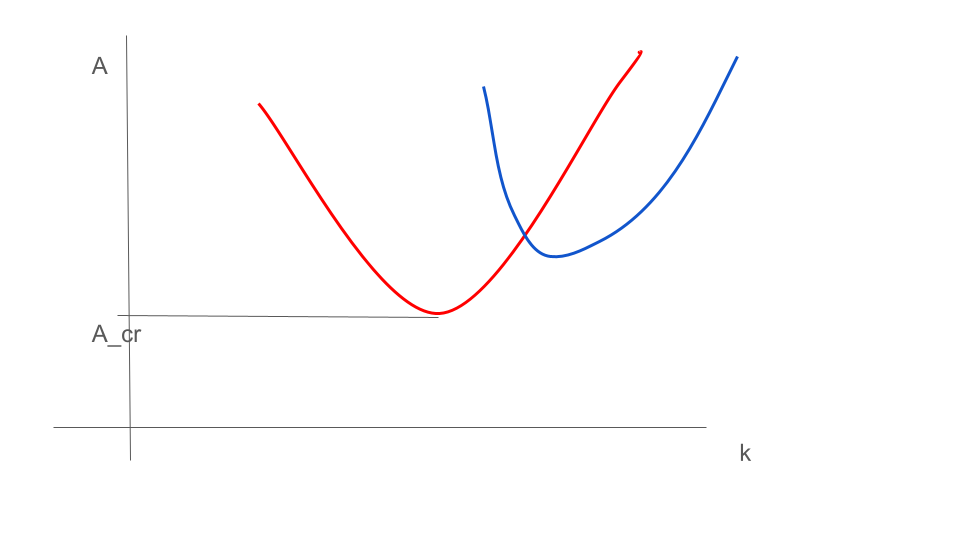}
\caption{\small The sketch of a neutral set in the case of $N=n=1$, where  the parameter is $A$ , and the wave number is $k$.}
\label{FgNtrlCrtcl}
\end{figure}

If a smooth path  intersects transversally a neutral  stratum of maximal dimension, then certain stable mode becomes unstable or vise versa.  This phenomenon is called \emph{the onset of instability}, and  we speak of oscillatory (monotonic) instability if the intersection occurs with an oscillatory (monotonic) stratum. The onsets of instabilities are  important precursors of the local bifurcations, which, in turn often get relevant  for studying the pattern formation in the dynamics of cells or populations, e.g. \cite{BrzKrv}-\cite{Grchv}. 
\subsubsection{Normal modes}
\label{SsscNrmlMds}
\noindent
Let the external signal undergoes no slow modulation -- that is, equalities \eqref{NoSlowMdln} holds true. Then the linearized system \eqref{EqLnrPrdLdngSlw}-\eqref{EqLnrDrvrLdngSlw}  possesses the translational invariance, \emph{ \emph{cf.} } Remark~\ref{RmOnCnstntCfcnts}.  Its normal modes read as expression \eqref{EgnMdsGnrl} shows, but  with amplitude $\hat{a}=(\hat{p},\hat{q},\hat{s})$. 

For a specific wave vector, $\hx$, the normal modes are determined by the eigenvectors of the an eigenvalue problem for $3\times 3$-matrix. To put it compactly, we set additional notation as follows:
\beear
k=|\hx|, \quad c\byd\frac{\vc^e\cdot \boldsymbol{\hx}}{k},\quad b_i\byd \frac{\vb^e_i\cdot \boldsymbol{\hx}}{k},\ i=1,2,\  \chi\byd \frac{\hx\cdot \mathcal{T}\hx}{k^2}
\label{DfChiC}
\eear
 where the notation of  $p_e$ is the equilibrium   density for the predators.   Real numbers  $a_{ij}$, $i,j=1,2$ are the coefficients of the linear functions, $a^e_i$, which, as well as  vectors $\vb^e_i$ and $\vc^e$, and tensor $\mathcal{T}$,  have been introduced upon formulating the linearized system \eqref{EqLnrPrdLdngSlw}-\eqref{EqLnrDrvrLdngSlw}. Recall that $p_e>0$ as the equilibrium in question is the coexistence one by assumption.
 
With these notations, the mentioned eigenvalue problem reads
\begin{eqnarray}
&
\left(
  \begin{array}{ccc}
\lambda +\mathrm{i}k {c}+ k^2\delta_1- a_{11}& -a_{12} &\mathrm{i}k p_e b_1-\chi p_e k^2\\
    -a_{21} &  \lambda - a_{22}+\mu k^2& \mathrm{i}k p_eb_2 \\
    0 & -\kappa_1 & \lambda +\nu+k^2\delta_2\\
  \end{array}
\right)\left(
         \begin{array}{c}
          \hat{p}\\
           \hat{q}\\
           \hat{s}\\
         \end{array}
       \right)=0.
&
\label{EqMtrxEgnVlPrblm}
\end{eqnarray}
where $\delta_1=\delta_2=0$ in a genuine spectral stability problem arising from the linearized system \eqref{EqLnrPrdLdngSlw}-\eqref{EqLnrDrvrLdngSlw}. Adding   terms $k^2\delta_1$ and $k^2\delta_2$  to the first and last diagonal elements of the matrix of problem \eqref{EqMtrxEgnVlPrblm}  aims at addressing the effect of small diffusion of  predator's and  driving signal ( \emph{cf.}  to the original system \eqref{EqPrdTrns}-\eqref{EqSgnl}).

Every eigenvalue $\lambda=\lambda(\hx)$, $k\neq 0$ of problem \eqref{EqMtrxEgnVlPrblm}  has the complex-conjugated counterpart, $\lambda^\ast=\lambda(-\hx)$. Every such a pair  deliver to the linearized system  the two-dimensional subspace of real-valued solutions that reads
\beear
\mathrm{Re}\,(\hat{a}(\hx)\exp(\lambda(\hx)t+\hx\cdot x)).
\eear
Hence, every real eigenvalue  is double, if any, except for the spatially homogeneous eigenmode  that correspond to $k=0$. In particular, the dimension of the neutral monotone stratum cannot be maximal  despite the real-valuedness of the  original linearized system.

The  homogeneous modes (that linked to $k=0$) remains stable provided that
\beear
a_{{11}}+a_{{22}}<0,\ a_{{11}}a_{{22}}-a_{{12}}a_{{21}}>0.
\eear
 Also, it's logical to see  the prey (predators) as something enhancing (suppressing) predator's (prey) reproduction -- that is, $a_{12}>0,\ a_{21}<0$ at an equilibrium of the coexisting species.  Further, calculating coefficients $a_{ij}$ and $b_{i}$ in the full generality seems to be too cumbersome. Fortunately, assuming $p-$linearity simplifies  this issue crucially (\emph{ \emph{cf.} } Remarks~\ref{RmOnP-Lnr} and~\ref{RmOnP-Lnr-1}). In particular, it entails
$
b_1=b_2=a_{11}=0,
$
and then only two parameters, $c$ and $\chi$, remain responsible for what the external signal contributes. We set  $\chi>0$. We have neither proved this nor found a contradiction to it. Finally, setting $a_{22}<0$ makes every spatially-homogeneous mode stable. 

Thus, we combine all the assumptions listed above including $p-$linearity, and set
\beear
\chi>0,\ a_{12}>0,\ a_{21}<0,\ a_{22}<0,\ b_1=b_2=a_{11}=0.
\label{EqPrmDomn}
\eear
Such an assumption, in particular, entails the stability of the homogeneous mode. Simultaneously, it makes the neutral monotone set empty.

According to one theorem of  matrices' theory \cite{GntMtrx}, Chpt.~XV, given a concrete wave-vector, $\hx$, the number of the corresponding unstable modes is equal to the number of the sign changes over the chain of the so-called Hankel's determinants of matrix \eqref{EqMtrxEgnVlPrblm}. Let these be denoted as
\beear
1,\Delta_2,\Delta_4,\Delta_6,
\eear
where the lower index is the dimension of the minor for which this discriminant is calculated. 

Generally, the Hankel's determinants are real-valued polynomials in the real and imaginary parts of the entries of the matrix in question. In the case under consideration, they depend on $c^2$, but not on $c$, and on all other  parameters, $\mu,\delta_1,\delta_2,\kappa,\nu,\chi,a_{21},a_{12},a_{22}$, $k$. Among them, changing only two parameters, $\chi$ and $c$, is feasible upon manipulating the external signal. It's worth noting that the values of $\chi$  and $c$ are actually the quadratic and linear functions  on the unit sphere in the space of wave vectors. We, however, postpone analyzing the possible impact of this dependence and merely fix a suitable point on that sphere. 

The claims  put forward in the rest of this sub-subsection easily follow  by direct calculations (though sometimes cumbersome) and by the implicit function theorem, assuming  the smallness of parameters $\delta_1$, $\delta_2$ and $c$, and, moreover, $\delta_2=O(\delta_1^2)$, $\delta_1\to+0$, if necessary.

Given what we have been speaking about, each neutral stratum of the maximal dimension consists of the non-degenerated solutions to exactly one of equations $\Delta_i=0$, $i=2,4,6$ (obeying the above restrictions, at the same time). However, $\Delta_2>0$ everywhere in the parametric domain, while $\Delta_4>0$ and $\Delta_6>0$ at every parametric point at hyperplane $\chi=0$. Hence, the neutral strata are the graphs of  functions that arise from isolating $\chi$ from the above equations. To avoid bulk notation, we do not explicitly indicate the dependence of these functions on the parameters we consider fixed.

The equation $\Delta_4=0$ turns out to be linear in $\chi$. So we isolate $\chi$ from there, and arrive at function $\chi=\chi_4$. Function $\chi_4$ is  an everywhere positive rational fraction , which goes to infinity when $k\to+0$ and $k\to\infty$. At the same time,
\beq
\chi_4=\chi_{40}+\chi_{42}c^2, 
\eeq
where the coefficients, $\chi_{40}$ and $\chi_{42}$,  are positive, so that $\chi_4\ge \chi_{40}$. Moreover, 
\beear
\inf_{k>0}\chi_{4}\byd\chi_4^*(\delta_1,\delta_2,\mu,a_{21},a_{12},a_{22})=\inf_{k>0}\chi_{40}>0.
\eear                                                                                                                                                        
Determinant $\Delta_6$ is cubic in $\chi$ and positive for $\chi=0$. Hence, the product of its zeroes is negative. Therefore, there are either no positive zeros or exactly two, which result from branching some positive multiple root. A suitable one occurs when $c^2$ touches zero that makes the matrix of problem \eqref{EqMtrxEgnVlPrblm}  real-valued.

In more detail, when  $c=0$, it turns out that determinant $\Delta_2$ is a divisor for $\Delta_4$, and  squared relation of them, $(\Delta_4/\Delta_2)^2$  is the divisor for $\Delta_6$, and $(\Delta_6\Delta^2_2/\Delta^2_4)>0$ is everywhere positive polynomial. Hence, $\chi_{40}$ is the double root of equation $\Delta_6|_{c=0}=0$. When the parameter $\chi$ goes through the threshold value, $\chi_{40}$, the distribution of signs over Hankel' chain changes as follows:
\beear
++++\,\mapsto\,++-+.
\eear
 This means the simultaneous occurrence of two unstable modes. They are complex conjugates since  the spectral stability problem is real-valued. Moreover, it possesses the mirror symmetry in $\hx$ that makes this pair double, as wave vectors $\pm\hx$ give rise to independent pairs of the eigenmodes that correspond to one and the same pair of conjugated eigenvalues. At the same time, there are no unstable modes when $\chi<\chi^*_{4}$ and $\chi$. 
 
Substituting  $\chi=\chi_{40}+\eps c$ leads to the following expression
\beear
& \Delta_6=c^2(-\Delta_{602}+\Delta_{620}\eps^2+c(\Delta_{630}\eps^3+\Delta_{612}\eps  +c(\Delta_{622}\eps^2+\Delta_{604}+c(\Delta_{614}\eps +c \Delta_{606})))),&
\\
 &\text{where}\quad\ \Delta_{602}>0,\   \Delta_{620}>0,& 
\eear
for every set of the other parameters.    So, equalities $c=0,\chi=\chi_{40}$ deliver a double solution to equation $\Delta_6=0$, and  function $\chi=\chi_{6}(c)$  turns out to be 2-valued at least  in some neighbourhood of parametric hyperplane $c=0$. Its branches   are graphs   of functions $\chi=\chi_{6}^\pm(c)=\chi_{40}+\eps^\pm(c)c$, where functions $\eps^\pm$  solve the following equation
\beq
 \Delta_{620}\eps^2+c(\Delta_{630}\eps^3+\Delta_{612}\eps  +c(\Delta_{622}\eps^2+\Delta_{604}+c(\Delta_{614}\eps +c \Delta_{606})))=\Delta_{602}.
 \eeq
Hence,
 \beear\label{EqChi6pm}
 \chi^\pm_{6}(c)=\chi_{40}\pm c\sqrt{\Delta_{602}/\Delta_{620} }+O(c^2),\ c\to+0.
 \eear
(where $\Delta_{602}$ and $\Delta_{620}>0$). It is  worth noting that 
\beear\label{IneqChi6VSChi4}
\chi_{6}^-(c)<\chi_{4}(c)=\chi_{40}+\chi_{42}c^2<\chi_{6}^+(c)
\eear
provided that $c>0$ is small enough (as $\chi_{42}>0$). By this observation,  altering the sign distributions over Hankel's chain upon  the gradual increasing in $\chi$ when $c>0$ is small enough reads as follows
\beq
+\,+\,+\,+\ \stackrel{\chi=\chi_{6}^-}{\mapsto}\ +\,+\,+\,-\ \stackrel{\chi=\chi_4}{\mapsto}\ +\,+\,-\,-\ 
\stackrel{\chi=\chi_{6}^+}{\mapsto}\ +\,+\,-\,+.
\eeq
By this diagram, for every specific non-zero wave vector, $\hat{x}$, no unstable modes exist for $\chi<\chi_{6}^-$, exactly one unstable mode exists for $\chi_{6}^-<\chi<\chi_{6}^+$, and exactly two unstable modes exist for $\chi_{6}^+<\chi$. It's worth noting that nothing happens upon going through the value  of $\chi_{4}$.  
\subsection{Effect of the external signal}
\noindent
For $k>0$ and $\hat{y}\in \mathbb{R}^n:\,|\hat{y}|=1$,  consider the spectral stability problem for a normal mode associated to wave vector  $\hat{x}=k\hat{y}$.   Then  the sensitivity coefficient and  the drift coefficient read $\chi=\hat{y}_1\mathcal{T}\hat{y}$ and $c=\hat{y}\cdot\vc^e$.  Subsequent analysis reveals  some effects of the external signal on the stability due to changing this pair.  

We recall that the neutral stratum in the parametric space consists of the graphs of functions $\chi=\chi^\pm_6$ or $\chi=\chi_4$, \emph{ \emph{cf.} } sub-subsection~\ref{SsscNrmlMds}. Set 
\beq
\chi_{st}\byd\min(\chi^\pm_6,\chi_4),\ \chi_{cr}\byd\inf_{k>0}\chi_{st},\quad \chi_*\byd \sup\limits_{|\hat{y}|=1}\chi,\quad c_*=\sup\limits_{|\hat{y}|=1}|c|
\eeq
Each individual mode associated to wave vector  $\hat{x}=k\hat{y}$  is stable provided that $\chi<\chi_{st}$, and there is at list one unstable among them for $\chi>\chi_{st}$.  Hence, the coexistence equilibrium is stable provided that $\sup_{|\hat{y}|}\chi\byd \chi_*<\chi_{cr}$ and unstable provided that $\chi_*>\chi_{cr}$.

Let the problem parameters belong to the level set of function 
\beq
\chi_{40}^*\byd\inf_{k>0}\chi_4|_{c=0}=1,
\eeq
and obey all the restrictions listed above, {\it  \emph{cf.} } row \eqref{EqPrmDomn}. (Check of the non-emptiness of this set is  straightforward though a bit cumbersome.) When the external signal is off,  the drift vanishes and  the transport tensor gets trivial -- that is, $\bar{\mathrm{V}}(\bar{\vw})=0\ \forall\,\vw$,  and  $\mathcal{T}=E$, therefore,  $c_*=0$ and  $\chi_*=1$, \emph{ \emph{cf.} } equations  \eqref{DfF&V}, \eqref{EqDfT&Ce} and \eqref{DfChiC}.  Note in passing, that $\chi_{cr}=\chi^*_{40}$ for $c=0$, \emph{ \emph{cf.} } sub-subsection~\ref{SsscNrmlMds}. Thus, we have set the parameters  to the values that are critical with no signal: $\chi_*=\chi_{cr}=\chi^*_{40}=1$. 

Let a stationary external signal be on, and all other parameters remain the same. Then $c=0$, c.f. sub-subsection~\ref{SsscStSgnl&Lnrzn}, and
\beq
\chi_*<1=\chi_{cr}=\chi^*_{40}
\eeq
for a generic signal,  \emph{cf.}  Remark~\ref{RmOnMinEgnTrTn}. Hence,  a stationary signal always exerts a stabilizing effect on the coexistence equilibrium.  It manifests itself most brightly if the signal obey equality \eqref{EqH=SumHiStd}. Then  $\chi_*\to +0$ exponentially when the effective amplitude of the signal goes to infinity,  {\it  \emph{cf.} } Remark~\ref{RmOnExpSprss}.

Let the external signal be an additive superposition of  plane traveling waves, each of which propagates along one of the coordinate axes, \emph{ \emph{cf.} } equality \eqref{H=TrWv}.   Then formulae \eqref{EqTrTnsTrWv} determine the transport tensor, which turns out to be  diagonal. Let's manipulate the speeds at which these waves propagates while keeping their shapes frozen.  Then $\chi_*<1$ when all the speeds are small enough, and $\chi_*>1$ when at least one speed is high enough, \emph{ \emph{cf.} } Remark~\ref{RmOnInrvtng}. For $c=0$, $\chi_*<1=\chi_{cr}$, and the coexistence equilibrium is stable.

Let $k=k_{cr}$ be the minimizer for $\chi_{st}$ -- that is, $\chi_{cr}=\chi_{st}(k_{cr})$. Then $\chi_{cr}=\chi_{40}(k_{cr})=1$ for $c=0$.  By equality \eqref{EqRsdDrftTrWv},  the value of $c_*$ tends to zero  for the high propagation speed. Hence, a suitable increase in it eventually  makes the following chain of inequalities true
\beq
\chi_{cr}(c)<\chi_{st}(c,k_{cr})=\chi^-_6(c,k_{cr})<1<\chi_*,\ 
 \eeq
by remark~\ref{RmOnInrvtng}, estimate \eqref{EqChi6pm}, and inequalities \eqref{IneqChi6VSChi4}. Hence, there exist unstable modes. 
\section{Discussion}\label{ScCncl}
\noindent
Thus, we have developed the full asymptotic expansion for the  solutions to the predator-prey system with the indirect Keller-Siegel's responding to the  prey  and to a general short-wave external signal in multiple dimensions with no special assumptions on the external signal or solution, nor the kinetic terms. This result substantially extends  that of \cite{AM1} even in one spatial dimension.

The leading term in this asymptotic has the form of the short-wave package. Its amplitude undergoes a slow modulation, which, in turn, obeys the leading slow system written explicitly. The last one, compared to the original system with no signal, exhibits two main features caused only by the short waves: the drift of the predators and the modification of the system's kinetics. The explicit expressions of the former reveal that an external signal is capable of suppressing the macroscopic mass transport. 

The system's kinetics avoids the modification if  it is p-linear -- that is, the kinetic terms, $pf(p,q)$ and $qg(p,q)$ are linear in $p$. Interestingly, this kind of kinetics is widely recognized in population dynamics,   \emph{cf.}  \cite{TtnTrFn}. At the same time, the cited article delivers an argument for the so-called ratio-dependent kinetics, that is not  p-linear. 

Since the original system with no signal possesses  the translational invariance, the leading slow system inherits it provided that the external signal does not undergo  a slow modulation.   This case allows for identifying the quasi-equilibria -- that is, the short-wave solutions undergoing no slow modulation -- with the homogeneous equilibria of the leading slow system and for studying the induced stability in analogy with the inverted pendulum theory by P. Kapitza. 

We have addressed the linear stability analysis of  the  homogeneous equilibrium  of the leading slow system.   The main features of its linearization  are the residual drift of the predators, $\vc^e$, transport tensor, $\mathcal{T}$, and additional transport of the  driver intensity perturbations emergent from linearizing the kinetics. The first feature does not occur for the stationary external signals, and the last feature does not occur for p-linear kinetics. 

The transport tensor measures the anisotropy of the predators sensitivity to the driver.   A generic stationary external signal generates  the transport tensor that indicates  suppressing the sensitivity. Moreover, there is a class of stationary signals, which allow for calculating the transport tensor explicitly and deliver examples of an exponential decay of the sensitivity in an arbitrarily specified direction when the external signal intensity grows up. At the same time, there is a class of unsteady external signals, which enhance the sensitivity and taxical transport in certain directions. This effect is due to the propagation of the external signal as a wave. Besides, the linear stability analysis reveals that an increase in the sensitivity leads to destabilization. Hence, its enhancement due to the external signal destabilizes, while its suppression stabilizes. It's worth noting that this destabilization can be described as a kind of blurring of the boundary between the domains of stability and instability existing in the parametric space of the system with no signal. We do not know any examples of the direct or indirect prey-taxis system, which allows for destabilizing the coexistence equilibrium due to the external signal only, in the sense that the equilibrium becomes stable for every set of the system parameters when the external signal is off.

An individual mode's stability substantially depends on the direction of the wave vector, $\hat{\vx}$. Among the ones corresponding to the same $|\hat{\vx}|$, the choice of the most dangerous one (in the sense of getting unstable) depends on relations between $\vc^e\cdot \hat{\vx}$ and $\hat{\vx}\cdot\mathcal{T}\hat{\vx}$. Given this, it's natural to raise questions regarding the control and optimization of the residual drift and transport tensor, and the same can be addressed to the non-linear drift, ${\mathrm{V}}$ too. This issue remains for future investigation.

Although the leading term of an asymptotic  gives us a summary of the most substantial information on the solution, the higher-order terms  can play an important part   in  justifying the asymptotic approximation. We hold positivity regarding solving this in the future, as there is a substantial piece of positive experience in the similar problems \cite{LevCnv1993}-\cite{Allr-2}.
\section{Conclusions}
The reaction-diffusion systems with PKS-cross-diffusion deliver a relatively simple tool for modeling the mass transport impacted by natural or artificial external signals.  Homogenizing the response to a short-wave external signal shows that the such a signal produces a drift (additional mass transport) and modifies the kinetic terms. Besides, calculating the drift and modified kinetics turns out to be  quite feasible modulo solving a PDE on the torus that can be done at least numerically. Hence, there is a possibility for controlling the system by applying the artificially designed  external signals. 

The drift essentially depends on the signal's shape. There are simple examples that unveil a substantial effect exerted from this drift on the taxical transport, but shaping the signal is capable of changing its effect to the opposite one: for instance, a stationary signal always suppresses the taxical transport, but the one propagating as a wave can enhance it. 

Further, while considering a small wavy perturbation of a homogeneous equilibrium of species, it turns out that the drift makes the evolution of the perturbation dependent on the direction at which it propagates. Hence, the originally supposed isotropy of the predators sensitivity to the driving signal gets broken. 

Regarding the equilibria instabilities (which are the most common precursors of the pattern formation), the effect of the external signal is stabilizing when the drift suppresses the sensitivity to the driver's intensity irrespective of the direction of its gradient and destabilizing when the sensitivity gets enhanced at least in some directions. There are examples of exponentially sharp stabilizing effects.
\medskip\\    
{\sc \textbf{Acknowledgments.}  {\footnotesize The authors are thankful to  Southern Federal University for the opportunity to do this research. By the same reason, A. Morgulis is grateful to the South Mathematical Institute of the Vladikavkaz Scientific Center of RAS.} }

 %%%%%%%%%%%%%%%%%%%%%%%%%%%%%%%%%%%%%%%%%%%%%%%%%%%%%%%%%%%%%%%%%%%%%%%%%%%%%%%%%%%%%%%%%%%%%%%%%%%%%
%%%%%%%%%%%%%%%%%%%%%%%%%%%%%%%%%%%%%%%%%%%%%%%%%%%%%%%%%%%%%%%%%%%%%%%%%%%%%%%%%%%%%%%%%%%%%%%%%%%%%%%%%%%%%

{\small
\begin{thebibliography}{999}
\addcontentsline{toc}{section}{References}
\bibitem{KrvOdll} Kareiva, P., \& Odell, G. (1987). Swarms of predators exhibit `preytaxis' if individual predators use area-restricted search. The American Naturalist, 130(2), 233-270.
\bibitem{Amann} Amann, H. (1993). Nonhomogeneous linear and quasilinear elliptic and parabolic boundary value problems. In Function spaces, differential operators and nonlinear analysis (pp. 9-126). Springer Fachmedien Wiesbaden.
\bibitem{Ts94} Ivanitskii, G.R., Medvinskii, A.B.,  Tsyganov, M.A. (1994) From the dynamics of population autowaves generated by living cells to neuroinformatics. Physics-Uspekhi, 37(10): 961--989.
\bibitem{Hrstmnn} Horstmann, D. (2004). From 1970 until present: the Keller-Segel model in chemotaxis and its consequences. II, Jahresber. Deutsch. Math.-Verein., 106, 51-69.
\bibitem{HllnPntr}    Hillen, T. \& Painter, K.J. (2009). A user's guide to PDE models for chemotaxis. Journal of mathematical biology, 58(1-2), 183-217.  
\bibitem{BellBellTao} Bellomo N.,Bellouquid A., Tao Y.,  Winkler M. (2015) Toward a mathematical theory of Keller-Segel models of pattern formation in biological tissues. Math. Models Methods Appl. Sci.   25(09): 1663-1763.
%\bibitem{Eft}      Eftimie, R.  Hyperbolic and Kinetic Models for Self-organised Biological Aggregations. A modelling and pattern formation approach.  Springer Nature Switzerland AG, Cham, Switzerland, 2018. pp. 1-278.
%
%\bibitem{Tvstk} Belyaev, A.K., Morozov, N.F., Tovstik, P.E. et al. The Stability of a Flexible Vertical Rod on a Vibrating Support. Vestnik St.Petersb. Univ.Math. 51, 296–304 (2018). https://doi.org/10.3103/S1063454118030020
  \bibitem{BrzKrv} Berezovskaya, F.S. \& Karev, G.P. (1999). Bifurcations of travelling waves in population taxis models. Physics-Uspekhi, 42(9), 917-929.
        \bibitem{GMT} Govorukhin V., Morgulis A., Tyutyunov Y. (2000) Slow taxis in a predator-prey model. Doklady Mathematics 61(3):  420-422.
\bibitem{AGMTS} Arditi R.,  Tyutyunov Y.,  Morgulis A., Govorukhin V., Senina I. (2001) Directed movement of predators and the emergence of density-dependence in predator-prey models.  Theoretical Population Biology 59(3): 207-221.
\bibitem{SapTA} Sapoukhina, N., Tyutyunov, Y., \& Arditi, R. (2003). The role of prey taxis in biological control: a spatial theoretical model. The American Naturalist, 162(1), 61-76.
\bibitem{Chaplain} Pearce I.G., Chaplain M.A.J., Schofield P.G., Anderson A.R.A, Hubbard S.F. (2007) Chemotaxis-induced spatio-temporal heterogeneity in multi-species host-parasitoid systems. J. Math. Biol. 55(3): 365-388.  
\bibitem{Chkr}Chakraborty, A., Singh, M., Lucy, D., \& Ridland, P. (2007). Predator–prey model with prey-taxis and diffusion. Mathematical and computer modelling, 46(3-4), 482-498.    
\bibitem{LeeHlnLws} Lee, J. M., Hillen, T., \& Lewis, M. A. (2009). Pattern formation in prey-taxis systems. Journal of biological dynamics, 3(6), 551-573.
\bibitem{HllnPntr1} Painter, K. J., \& Hillen, T. (2011). Spatio-temporal chaos in a chemotaxis model. Physica D: Nonlinear Phenomena, 240(4-5), 363-375.
        \bibitem{BnPtrRtDpnd} Banerjee, M., \& Petrovskii, S. (2011). Self-organised spatial patterns and chaos in a ratio-dependent predator–prey system. Theoretical Ecology, 4, 37-53.
 \bibitem{TllWrzk}  Tello I.J.,  Wrzosek D. (2016) Predator-prey model with diffusion and indirect prey-taxis.    Math. Models  Methods  Appl. Sci. 26(11): 2129-2162.           
\bibitem{TtnZgr} Tyutyunov Y., Titova L., Senina I. (2017) Prey-taxis destabilizes homogeneous stationary state in spatial Gause-Kolmogorov-type model for predator-prey system. Ecological Complexity; 31: 170-180.             
 \bibitem{WngYngZhng} Wang Q., Yang J., Zhang L. (2017) Time-periodic and stable patterns of a two-competing-species Keller-Segel chemotaxis model: Effect of cellular growth. Discrete \& Continuous Dynamical Systems - B. ; 22(9):3547-3574.               
\bibitem{TtnZgr} Tyutyunov Y., Titova L., Senina I. (2017) Prey-taxis destabilizes homogeneous stationary state in spatial Gause-Kolmogorov-type model for predator-prey system. Ecological Complexity; 31: 170-180.   
    Mathematics, 14(7), 1165. https://doi.org/10.3390/math14071165
\bibitem{TtnSnTtvBnrj} Tyutyunov, Y. V., Sen, D., Titova, L. I., \& Banerjee, M. (2019). Predator overcomes the Allee effect due to indirect prey–taxis. Ecological complexity, 39, 100772. https://doi.org/10.1016/j.ecocom.2019.100772
\bibitem{WngWng} Wang, J., Wang, M. The Dynamics of a Predator–Prey Model with Diffusion and Indirect Prey-Taxis. J. Dyn. Diff. Equat., 32, 1291–1310 (2020). https://doi.org/10.1007/s10884-019-09778-7
\bibitem{Chdh1} Chowdhury, P. R., Petrovskii, S., \& Banerjee, M. (2021). Oscillations and pattern formation in a slow–fast prey–predator system. Bulletin of mathematical biology, 83, 1-41.
\bibitem{Chdh2} Chowdhury, P. R., Petrovskii, S., \& Banerjee, M. (2022). Effect of slow–fast time scale on transient dynamics in a realistic prey-predator system. Mathematics, 10(5), 699.  
\bibitem{Li} Li, S. (2023). Positive steady-state solutions for a class of prey-predator systems with indirect prey-taxis. SIAM Journal on Mathematical Analysis, 55(6), 6342-6374. https://doi.org/10.1137/22M1529518
\bibitem{Chdh3} Roy Chowdhury, P., Banerjee, M., \& Petrovskii, S. (2024). A two-timescale model of plankton–oxygen dynamics predicts formation of oxygen minimum zones and global anoxia. Journal of Mathematical Biology, 89(1), 8.
 \bibitem{TaoW} Mu, C., Tao, W., \& Wang, Z. A. (2024). Global dynamics and spatiotemporal heterogeneity of a preytaxis model with prey-induced acceleration. European Journal of Applied Mathematics, 1-33.
     \bibitem{Grchv} Giricheva, E. (2024) Taxis-Driven Pattern Formation in Tri-Trophic Food Chain Model with Omnivory.
Mathematics, 12, 290
\bibitem{WuWnGn} Wu, S., Wang, Y., \& Geng, D. (2025). Dynamic and pattern formation of a diffusive predator–prey model with indirect prey-taxis and indirect predator-taxis. Nonlinear Analysis: Real World Applications, 84, 104299. https://doi.org/10.1016/j.nonrwa.2024.104299
\bibitem{Xie} Xie, Z., \& Li, Y. (2025). Classical solutions to a pursuit‐evasion model with prey‐taxis and indirect predator‐taxis. Mathematical Methods in the Applied Sciences, 48(4), 4117-4143.  https://doi.org/10.1002/mma.10536
\bibitem{TllWrzLst}Ignacio Tello, J., \& Wrzosek, D. (2026). From indirect to direct taxis by fast reaction limit. Mathematical Models and Methods in Applied Sciences, 36(01), 173-204.https://doi.org/10.1142/S0218202526500041     
%
%
%
\bibitem{PgGd} Poggiale, J.-C., \& Gauduchon, M. (2026). A General Class of Growth Models in a Patchy Environment Exhibiting Enhanced Production.
\bibitem{Allr-1} Allaire, G. A brief introduction to homogenization and miscellaneous applications. IESAIM: Proceedings. In ESAIM: Proceedings (Vol. 37, pp. 1-49).  EDP Sciences; 2012.
\bibitem{LndLfs} Landau L.D., Lifshitz E.M. Mechanics. 3d edition. Elsivier, 1982. 224 p.

\bibitem{ZenSimIzvAN66}
Zen'kovskaya, S. M., Simonenko, I. B. (1966). Effect of high frequency vibration on convection initiation. Fluid Dynamics, 1(5), 35-37.
%
\bibitem{ZenMZhG68}
Zen'kovskaya, S. M. (1968). Study of convection in a liquid layer with vibrating forces. Fluid Dynamics, 3(1), 35-36.
%
\bibitem{LbmvBook} Dmitri V. Lyubimov; Tatiana P. Lyubimova; Anatoli A. Tcherepanov; Bernard H. Roux. Vibration influence on fluid interfaces. Comptes Rendus. Mécanique, Microgravity / La micropesanteur, Volume 332 (2004) no. 5-6, pp. 467-472. doi: 10.1016/j.crme.2004.01.013
%
\bibitem{Yudovich4}  V. I. Yudovich, Vibration dynamics of systems with constraints,  Dokl. Math., 42:6 (1997), 322–325
%
\bibitem{Yudovich3} V.I. Yudovich,
The dynamics of a particle on a smooth vibrating surface, Journal of Applied Mathematics and Mechanics, Volume 62, Issue 6,
1998,P. 893-900, https://doi.org/10.1016/S0021-8928(98)00114-2.
%
\bibitem{MrVld}   Vladimirov, V. A.,  Morgulis, A. B. (2014). Relative equilibria in the Bjerknes problem. Siberian Mathematical Journal, 55(1), 35-48.
%
\bibitem{TvstkT} Babenko, A.V., Polyakova, O.R., Tovstik, T.P. (2023). Conceptual Generalizations of the Kapitsa Problem. In: Altenbach, H., Irschik, H., Porubov, A.V. (eds) Progress in Continuum Mechanics. Advanced Structured Materials, vol 196. Springer, Cham. https://doi.org/10.1007/978-3-031-43736-6-4  
\bibitem{AM3} Morgulis, A.; Malal, K.H. Prey-Taxis vs. An External Signal: Short-Wave Asymptotic and Stability Analysis. Mathematics 2025, 13, 261. https://doi.org/10.3390/math13020261
\bibitem{AM4} Morgulis, A., Malal, K. Prey-Taxis Vs A Shortwave External Signal In Multiple Dimensions. J Math Sci (2025). https://doi.org/10.1007/s10958-025-08063-x
\bibitem{AM1} Morgulis, A., \& Ilin, K. (2020). Indirect taxis on a fluctuating environment. Mathematics, 8(11), 2052.
\bibitem{Nrnbrg} Nirenberg, L. (1953) A strong maximum principle for parabolic equations. Comm. Pure  Appl.  Math.  6(2): 167-177.
\bibitem{Lnds} Landis, E.M. Second order equations of elliptic and parabolic type. American Math. Soc., 1997. 203 p.
\bibitem{Ydvch}Yudovich, V. I.  The linearization method in hydrodynamical stability theory American Mathematical Soc., 1989, 170 p.
\bibitem{Iss} Haragus, M., \& Iooss, G.  Local bifurcations, center manifolds, and normal forms in infinite-dimensional dynamical systems. Springer Science \& Business Media, 2010.
\bibitem{TtnTrFn}  Tyutyunov Yu.V. and Titova LI (2021) Ratio-Dependence in Predator-Prey Systems as an Edge and Basic Minimal Model of Predator Interference. Front. Ecol. Evol. 9:725041. doi: 10.3389/fevo.2021.725041    

\bibitem{GntMtrx} Gantmacher,~F.R. Theory of Matrices. Vol.~2. Chelsea publishing, New York, USA, 1959.    
\bibitem{LevCnv1993} Levenshtam, V. B. Justification of the averaging method for the convection problem with high-frequency vibrations, Siberian Math. J., 34:2 (1993), 280–296 
\bibitem{LevIzvRAN06}  Levenshtam, V.B.  Justification of the averaging method
for parabolic equations containing rapidly oscillating terms
with large amplitudes, // Izvestiya: Mathematics, 2006, Volume 70, Issue 2, Pages 233–263
DOI: https://doi.org/10.1070/IM2006v070n02ABEH002311
\bibitem{LevAA14} Levenshtam, V.B. Justification of the averaging method for a system of equations with the Navier–Stokes operator in the principal part. St. Petersburg Mathematical Journal, 2015, Volume 26, Issue 1, Pages 69–90
DOI: https://doi.org/10.1090/S1061-0022-2014-01331-3
%\bibitem{LevTrPetr09}  Levenshtam, V.B. Asymptotic analysis of some classes of ordinary differential equations with large high-frequency terms. J Math Sci 163, 89–110 (2009). https://doi.org/10.1007/s10958-009-9660-3
\bibitem{LevSmz23} Levenshtam, V.B. Averaging a High-Frequency Hyperbolic System of Quasilinear Equations with Large Summands. Sib. Math. J. 65, 934–942 (2024). https://doi.org/10.1134/S0037446624040189    
\bibitem{Allr-2} Allaire, G. (1992) Homogenization and two-scale convergence. SIAM Journal on Math.  Analysis, 23(6): 1482-1518.
\bibitem{MrchKhrs} Marchenko V.A., Khruslov E.Y. Homogenization of partial differential equations. Springer Science \& Business Media; 2008 Dec 22.  
%\bibitem{BrzKrv1} Berezovskaya, F.S. \& Karev, G.P. (2000). Parametric portraits of travelling waves of population models with polynomial growth and auto-taxis rates. Nonlinear Analysis: Real World Applications, 1(1), 123-136.
%\bibitem{Ts03} Tsyganov M.A., Brindley J., Holden A.V., Biktashev V.N. (2003) Quasisoliton interaction of pursuitevasion waves in a predator-prey system. Phys. Rev. Lett.; 91(21): 218102-1-4.
%\bibitem{Ts04} Tsyganov M.A., Brindley J., Holden A.V., Biktashev V.N. (2004) Soliton-like phenomena in one-dimensional cross-diffusion systems: a predator-prey pursuit and evasion example. Physica D: Nonlinear Phenomena 197(1-2): 18-33.
%\bibitem{Ts04-1} Tsyganov M.A., Biktashev V.N. (2004) Half-soliton interaction of population taxis waves in predator-prey systems with pursuit and evasion. Physical Review E 70(3): 031901.
%\bibitem{HrstmnStvns} Horstmann, D. \& Stevens, A. (2004). A constructive approach to traveling waves in chemotaxis. Journal of Nonlinear Science, 14(1), 1-25.
%\bibitem{Southall} Southall, B.L., et al. (2019) Marine mammal noise exposure criteria: Updated scientific recommendations for residual hearing effects. Aquatic Mammals 45.2: 125-232.
 %\bibitem{Hsu} Hsu, A.C. et al. (2015) Tuna and swordfish catch in the US northwest Atlantic longline fishery in relation to mesoscale eddies. Fisheries oceanography 24.6: 508-520.
%\bibitem{Royer} Royer, F. et al. (2005) Determining bluefin tuna habitat through frontal features in the Mediterranean Sea. Collective Volume of Scientific Papers 58.4: 1275-1284.
%\bibitem{Reese} Reese, D.C., et al. (2011) Epipelagic fish distributions in relation to thermal fronts in a coastal upwelling system using high-resolution remote-sensing techniques.  ICES Journal of Marine Science 68.9: 1865-1874.
%\bibitem{Kratina} Kratina, P. et al. (2012) Stability and persistence of food webs with omnivory: is there a general pattern? Ecosphere 3.6: 1-18. 
%\bibitem{ArG} Arditi, R., \& Ginzburg, L. R. (1989). Coupling in predator-prey dynamics: ratio-dependence. Journal of theoretical biology, 139(3), 311-326.  
    % \bibitem{Hillen} Dolak Y., Hillen T. (2003). Cattaneo models for chemosensitive movement: numerical solution and pattern formation. Journal of mathematical biology 46(5): 461--478
%\bibitem{Filbet} Filbet F, Laurencot P,  Perthame B (2005). Derivation of hyperbolic models for chemosensitive movement. Journal of Mathematical Biology 50(2): 189--207
%\bibitem{IssJsph}  Iooss, G., \& Joseph, D.D. Elementary stability and bifurcation theory. Springer Science \& Business Media, 2012, 285 p.%http://dx.doi.org/10.1007/978-1-4684-9336-8
%\bibitem{ArnAfIlSh} Arnold, V.I., Afrajmovich, V.S., Il'yashenko, Y.S., \& Shil'nikov, L.P. Dynamical systems V: bifurcation theory and catastrophe theory (Vol. 5). Springer Science \& Business Media,  2013. 227 p.
%\bibitem{HrgsIss}  Haragus, M. \& Iooss, G. Local bifurcations, center manifolds, and normal forms in infinite-dimensional dynamical systems. Springer Science \& Business Media, 2010, 329 p.
%\bibitem{M} Morgulis, A. Waves in a Hyperbolic Predator--Prey System. Axioms 2022, 11(5), 187
 %   \bibitem {YdCsmOscInst} Yudovich, V. (1998) Cycle-creating bifurcation from a family of equilibria of a dynamical system and its delay, J. Appl. Math. Mech. {62 (1)}: 19-29.
%\bibitem{Kapitza}  Kapitza, P.L. Collected Papers.  Vol.2,  pp.714–737.  PergamonPress, London, 1965.
%\bibitem{BrzKrv+} Berezovskaya, F.S., Novozhilov A.S. \& Karev G.P. (2008) Families of traveling impulses and fronts in some models with cross-diffusion. Nonlinear Analysis: real world applications. 9.5: 1866-1881.
%\bibitem{LiWangShao} Li C., Wang X., Shao Y. (2014) Steady states of a predator-prey model with prey-taxis. Nonlinear Analysis: Theory, Methods and Applications. 97: 155-168.
%\bibitem{Yudovich} Yudovich, V.  (1997) The dynamics of vibrations in systems with constraints.  Doklady Physics {\bf 42},  322-325.
%\bibitem{Vldmrv} Vladimirov, V. (2005) On vibrodynamics of pendulum and submerged solid. Journ.  of Math. Fluid Mech. 7 (S3): S397-S412.
%\bibitem{Vldmrv1} Vladimirov, V. (2017) Two-Timing Hypothesis, Distinguished Limits, Drifts, and Pseudo-Diffusion for Oscillating Flows. Studies in Appl. Math. 138(3): 269-293.
   %\bibitem{YudMrsh} Morshneva, I. \& Yudovich, V. (1985) Bifurcation of cycles from equilibria of inversion-and rotation-symmetric dynamical systems. Sib. Math. J. {26(1)}: 97-104.
%\bibitem{Andr} Andronov, A. A., Vitt, A. A. F., \& Khaikin, S. E.  Theory of Oscillators: Adiwes International Series in Physics~(Vol. 4). Elsevier, 2013.
%\bibitem{Arnld} Arnol'd, V.I. Geometrical methods in the theory of ordinary differential equations. Vol. 250. Springer Science \& Business Media, 2012.
    %\bibitem{Hssrd} Hassard, B. D., Hassard, D. B., Kazarinoff, N. D., Wan, Y. H., \& Wan, Y. W. (1981). Theory and applications of Hopf bifurcation~(Vol. 41). CUP Archive, 1981.
%\cite{Pnkv}Pankov A.A. G-convergence and homogenization of nonlinear partial differential operators. Springer Science & Business Media; 2013 Apr 17.
%\bibitem{ArGn} Arditi, R., \& Ginzburg, L. R. (1989). Coupling in predator-prey dynamics: ratio-dependence. Journal of theoretical biology, 139(3), 311-326.

%\bibitem{Kato} T. Kato. Perturbation theory for linear operators. Vol. 132. Springer Science \& Business Media, 2013.
%\bibitem{CrrtPrtVchlt}Cerreti, F., Perthame, B., Schmeiser, C., Tang, M., \& Vauchelet, N. (2011). Waves for a hyperbolic Keller–Segel model and branching instabilities. Mathematical Models and Methods in Applied Sciences, 21(supp01), 825-842.



\end{thebibliography}}
\end{document}
%%%%%%%%%%%%%%%%%%%%%%%%%%%%%%%%%%%%%%%%%%%%%%%%%%%%%%%%%%%%%%%%%%%%%%%%%%%%%%%%%%%%%%%%%%%%%%%%%%%%
%%%%%%%%%%%%%%%%%%%%%%%%%%%%%%%%%%%%%%%%%%%%%%%%%%%%%%%%%%%%%%%%%%%%%%%%%%%%%%%%%%%%%%%%%%%%%%%%%%%%%%%%%%%%%
%%%%%%%%%%%%%%%%%%%%%%%%%%%%%%%%%%%%%%%%%%%%%%%%%%%%%%%%%%%%%%%%%%%%%%%%%%%%%%%%%%%%%%%%%%%%%%%%%%%%
%%%%%%%%%%%%%%%%%%%%%%%%%%%%%%%%%%%%%%%%%%%%%%%%%%%%%%%%%%%%%%%%%%%%%%%%%%%%%%%%%%%%%%%%%%%%%%%%%%%%%%%%%%%%%
are for the functions defined 
Indeed, 
$
\mu_1\langle \efr_j\pr_j \psi_j\rangle=
\mu_1\langle \efr_j\pr_j(\omega_j+\mu_1\pr_j)^{-1}\efr^{-1}_i\rangle=
$
\beear &
=\langle \efr_j(\omega_j+\mu_1\pr_j)^{-1}(\omega_j+\mu_1\pr_j-\omega_j)\efr^{-1}_i\rangle=
1-\omega_j\langle \efr_j(\omega_j+\mu_1\pr_j)^{-1}\efr^{-1}_i\rangle=1-\omega_j\langle \efr_j \psi_j\rangle
\eear